\newtheorem{teo}{Theorem}[section]
\newtheorem{prop}[teo]{Proposition}
\newtheorem{lemma}[teo]{Lemma}
\newtheorem{remark}{Remark}
\newtheorem{defi}{Definition}
\numberwithin{equation}{section}
\newcommand{\ed}{\end{document}}
\newcommand{\ep}{\end{proof}\ed}
\newcommand{\real}{\mathbb{R}}
\newcommand{\N}{\mathbb{N}}
\newcommand{\sta}{\stackrel}
\newcommand{\dsp }{\displaystyle}
\newcommand{\di}{{\displaystyle\int}}
\newcommand{\bv}{\mathbf{v}}
\newcommand{\bu}{\mathbf{u}}
\newcommand{\ba}{\mathbf{a}}
\newcommand{\bn}{\mathbf{n}}
\newcommand{\bb}{\mathbf{b}}
\newcommand{\bw}{\mathbf{w}}
\newcommand{\bphi}{\boldsymbol{\varphi}}
\newcommand{\bpsi}{\boldsymbol{\psi}}
\begin{document}

\title[Steady Flow for Shear Thickening Fluids]
{Steady Flow for 
Shear Thickening Fluids with Arbitrary Fluxes
}
\author[G.J. Dias]{Gilberlandio J. Dias}
\address{Colegiado de Matem\'atica, Universidade Federal do Amap\'a-UNIFAP\\
Rodovia Juscelino Kubistchek de Oliveira, s/n, Jardim Marco Zero\\ 
Caixa Postal 261\\
Macap\'a, AP 68902-280, Brazil}
\email{gjd@unifap.br}

\author[M.M. Santos]{Marcelo M. Santos}
\address{Departamento de Matem\'atica, IMECC\\
Rua S\'ergio Buarque de Holanda, 651, Cidade Universit\'aria Zeferino Vaz\\
Universidade Estadual de Campinas - UNICAMP\\
Campinas, SP 13083-859, Brazil}
\email{msantos@ime.unicamp.br}

\thanks{Supported partly by Conselho Nacional de Desenvolvimento Cient\'ifico e Tecnol\'ogico (CNPq), Brasil, under grant 307192/2007-5.
}

\subjclass{76D05, 76D03, 35Q30, 76D07}

\keywords{Power law fluids, Ladyzhenskaya-Solonnikov problem, 
non-Newtonian fluids, shear thickening fluids, Ostwald-De Waele law, Leray problem}

\maketitle
\begin{abstract} 
We solve the stationary Navier-Stokes equations for non-Newtonian incompressible fluids with shear dependent viscosty in domains with unbounded outlets, in the case of shear thickening viscosity, i.e. the
viscosity $\mu$ is given by the power law $\mu=|D(\bv)|^{p-2}$, where
$|D(\bv)|$ is the shear rate and $p>2$. The flux assumes arbitrary
given values and the Dirichlet integral of the velocity
field grows at most linearly in the outlets of the domain. 
Under some smallness conditions on the {\lq\lq}energy dispersion{\rq\rq}
we also show that the solution of this problem is unique. Our results
are an extension of those obtained in \cite{ls} for Newtonian fluids~ ($p=2$).
\end{abstract}

\maketitle

\section{Introduction}
\label{introd}
$\indent$ 
The Navier-Stokes system for stationary incompressible flows in a domain with
unbounded straight outlets, with the velocity field converging to parallel
flows (Poiseuille flow) in the ends of the outlets, was solved first by 
C. Amick \cite{amick} in the 1970s. This problem
is known as {\em Leray problem}, 
cf. \cite[p. 476]{amick}. Amick's solution assumes the fluxes of the fluid in the
outlets to be sufficiently small, which turns out to be a sufficient condition to deal with the convective 
(nonlinear) term in Navier-Stokes equations. It is an open problem to
solve Leray problem for arbitrary fluxes. Alternately, Ladyzhenskaya and Solonnikov
\cite{ls} 
considered the stationary Navier-Stokes equations not demanding the fluid to be parallel in the ends of the outlets, but instead having arbitrary fluxes. In this case, the
outlets do not need to be straight and they solved this new problem for domains having arbitrary
uniformly bounded cross sections and with the fluid having arbitrary fluxes. Besides, their solution has the property that the Dirichlet's integral of the velocity field of the fluid grows
at most linearly with the direction of each outlet, and they also proved that this solution is
unique under some additional smallness condition.
   
In this paper we extend the 
Ladyzhenskaya-Solonnikov's theorem, i.e. {\lq\lq}Theorem 3.1{\rq\rq} in
\cite{ls}, for power-law shear thickening fluids,
i.e. incompressible non-Newtonian fluids obeying the power law
\begin{equation}
\label{power-law}
\mathbb{S}=|D({\mathbf{v}})|^{p-2}D({\mathbf{v}}), 
\end{equation}
when $p>2$. Here, $\mathbb S$ is the viscous stress tensor, $\mathbf{v}$ is the velocity field
of the fluid and $D(\mathbf{v})$ is the symmetric part of velocity gradient $\mathbf{\nabla v}$
(i.e. $D_{ij}(\mathbf{v})=\frac{1}{2}(\frac{\partial v_j}{\partial x_i}+\frac{\partial v_i}{\partial x_j}$) for $\mathbf{v}=(v_1, \cdots, v_n)$, \ $i,j\in\{1,\cdots,n\}$, 
$n\in\mathbb{N}$).
For $p=2$, the fluid is Newtonian. If
$1<p<2$, the fluid is called {\em shear thinning} (or plastic and pseudo-plastic) and if $p>2$, {\em shear thickening} (or dilatant). 
In engineering literature the power law \eqref{power-law} is also known as 
Ostwald-De Waele law (see e.g. \cite{bsl}). 
Corresponding to \eqref{power-law} we have the following system of equations modelling the flow of an incompressible fluid in a stationary
regime:
\begin{equation}
\label{nspl}
\left\{\begin{array}{c}
-\,\mbox{div}(|D({\mathbf{v}})|^{p-2}D({\mathbf{v}})) + (\mathbf{v}\cdot\nabla)\mathbf{v} 
+\nabla {\mathcal P} = 0\\
\ \ \ \mbox{div}\,\mathbf{v}=0\, ,
\end{array}\right.
\end{equation}
where ${\mathcal P}$ is the pression function of the fluid (and $\mathbf{v}$ is the velocity
field, as already indicated above). 
This model equations are also referred to as Smagorinsky model, due
to \cite{smago}, or Ladyzhenskaya model, due to \cite{lady-2,lady-1,lady}.
A related model where the viscosity is given by $|\bv|^{p-2}$, instead of $|D(\bv)|^{p-2}$, is considered in \cite{lions}.  
%
For this case, it is shown in \cite[Remark 5.5 in Chap.2, \S 5.2]{lions} the existence of a (weak) solution for system \eqref{nspl} in a bounded domain with homogeneous
Dirichlet boundary condition, for $p\geq \frac{3n}{n+2}$ .  There are
many results concerning the solution of \eqref{nspl} in bounded domains.
For instance, in \cite{fms} the existence of a solution for \eqref{nspl}  is obtained under the weaker condition that $p\ge\frac{2n}{n+2}$.

In unbounded domains there are not so many results.
For parallel fluids we can identify $\mathbf{v}$ with a scalar
function $v$ and the system \eqref{nspl} reduces to the $p$\,-{\em Laplacian} equation
\begin{equation}
\label{parallel p-laplacian}
-\,\mbox{div}(|\nabla v|^{p-2}\nabla v) = c
\end{equation}
for some constant $c$ (related to the {\lq\lq}{\em pressure drop}{\rq\rq}).
So, we can consider the Leray problem for \eqref{nspl}, i.e. the solution of \eqref{nspl} in a domain with straight outlets with the velocity field
tending to the solution of \eqref{parallel p-laplacian} in the ends of
the outlets. This problem was solved by E. Maru\v{s}i\'c-Paloka \cite{marusic}
under the condition that the fluxes are sufficiently small and $p>2$, thus extending Amick's
theorem \cite{amick} for power fluids with $p\ge2$. As far as we know,
the Leray problem for \eqref{nspl} when $p<2$ (with small fluxes) is an
open problem.

In this paper, as we mentioned above, we extend Ladyzhenskaya-Solonnikov's
theorem \cite[Theorem 3.1]{ls} for \eqref{nspl} when $p>2$. More precisely,  we obtain the existence of a solution $\bv$ to the system
\eqref{nspl} for $n=2,3$, and $p\ge2$, in a domain $\Omega$ with unbounded outlets,
specified in the next Section, for any given fluxes in the outlets and homogeneous 
Dirichlet boundary condition $\bv|\partial\Omega=0$. 
The {\lq\lq}Dirichlet integrals{\rq\rq} $\int |\nabla\mathbf{v}|^p$ of our solution grows at most linearly with the direction of the outlets (see 
\eqref{lady-solo pb}$_5$ in Section \ref{section 2}).
Besides, we observe that these integrals over portions of the outlets
with a fixed {\lq}length{\rq} are bounded by a constant that tends to zero with the flux (see Proposition \ref{prop.2} and Remark \ref{kappa alpha}).
Under this condition and some aditional one, we have uniqueness of
solution (see Theorem \ref{teo.2}). All these facts were obtained in
\cite{ls} for the case $p=2$, but the power-law model (\eqref{nspl} with $p\not=2$) was not treated in \cite{ls}.
In the next two paragraphs we look at some facts relating to the case $p\not=2$.
 
First, 
to deal with the nonlinear term
$\mbox{div}(|D({\mathbf{v}})|^{p-2}D({\mathbf{v}}))$ one can use the monotone method of Browder-Minty.
Secondly, we extend the technique employed in \cite{ls} to obtain the existence of a solution, which, in particular, consists in first solving the problem
in a bounded truncated domain and then taking the limit when
the parameter of the truncation tends to infinity, to obtain a solution in the whole domain.
To take this limit we need first uniform estimates with respect to the
truncation parameter for the solution
in the truncated domain, and this is obtained by integrating by parts the equation times the solution in some fixed bounded domain. Then we need the regularity of
the solution in bounded domains, more precisely, that the solutions
have velocity field at least in the Sobolev space $W^{2,l}$
and pressure in $W^{1,l}$, for some positive number $l$, due to the
boundary terms that comes from the integration by parts.
However this regularity is not expected for the weak solutions of 
\eqref{nspl}, if $p\not=2$. To overcome this difficult, when 
dealing with \eqref{nspl} in a truncated bounded domain we modify it to
\begin{equation}
\label{nspl modified}
\left\{\begin{array}{c}
-\,\mbox{div}\{\big(\frac{1}{T}+|D(\bv)|^{p-2}\big)D(\bv)\} + (\bv\cdot\nabla)\bv +\nabla {\mathcal P} = 0\\
\ \ \ \mbox{div}\,\mathbf{v}=0\, ,
\end{array}\right.
\end{equation}
where $T>0$ is the truncation parameter. See Proposition \ref{prop.1}
in Section \ref{main results}. 



\smallskip

\smallskip

As in \cite{sp} and \cite{ls}, and in several subsequent papers, here the velocity field ${\bv}$ is sought in the form $\bv=\bu+\ba$, where
$\bu$ is the new unknown with zero flux and $\ba$ is a constructed vector
field carrying the given fluxes in the outlets (i.e. if the given flux in
an outlet with cross section $\Sigma$ is $\alpha$ then $\int_\Sigma\ba\cdot\bn=\alpha$ and $\int_\Sigma\bu\cdot\bn=0$, where $\bn$ is the unit normal vector to $\Sigma$ pointing toward infinity). This vector field
$\ba$ depends on the geometry of the domain and, in the aforementioned papers, its construction is very
tricky and makes use of the Hopf cutoff function (see \cite{sp,ls}). 
In the case of power-law fluids \eqref{nspl} with $p>2$ we found out that 
the construction of $\ba$ can be quite simplified. Indeed, a key point in
the construction, in any case, is to obtain a vector field $\ba$ that
controls the quadratic nonlinear term $(\bu\nabla\bu)\ba$,
which appears after substituting $\bv=\bu+\ba$ in \eqref{nspl} and
multiplying it by $\bu$. That is, to obtain a priori estimates, 
one multiplies the first equation in \eqref{nspl} by $\bu$ and try to
bound all the resulting terms by the {\lq}leading{\rq} term $|D(\bu)|^p$.
In \cite{ls} it is shown that for any positive number $\delta$ there is a vector field $\ba$ which, in particular, satisfies the estimate
$$
\int_{\Omega_t}|\bu|^{2}|\ba|^{2}
\le c\delta^2\int_{\Omega_t}|\nabla\bu|^2
$$
for some constant $c$ indepedent of $\delta$, $\bu$ and $\Omega_t$, where $\Omega_t$ is any truncaded portion of the domain with a length of order $t$. Looking at their construction and using Korn's inequality 
it is possible to show that
\begin{equation}
\label{delta estimate}
\int_{\Omega_t}|\bu|^{p'}|\ba|^{p'}\le c\delta^{p'}t^{(p-2)/(p-1)}\left(\int_{\Omega_t}|D(\bu)|^p\right)^{p'/p},
\end{equation}
where $p'$ is the conjugate exponent of $p$, i.e. $p'=p/(p-1)$. 
When $p=2$ this estimate reduces to $|\int_{\Omega_t}(\bu\nabla\bu)\ba|\le c\delta\int_{\Omega_t}|\nabla \bu|^{2}$. With this estimate we can estimate the integral of
$(\bu\nabla\bu)\ba$ in the truncated domain $\Omega_t$, by using H\"older 
inequality:
\begin{equation}
\label{convective delta estimate}
\begin{array}{rl}
|\di_{\Omega_t}(\bu\nabla\bu)\ba\,|
&\le\left(\di_{\Omega_t}|\nabla\bu|^2\right)^{1/2}
\left(\di_{\Omega_t}|\bu|^{2'}|\ba|^{2'}\right)^{1/2'}\\
&\le c\delta \di_{\Omega_t}|\nabla \bu|^{2}.
\end{array}
\end{equation}
Thus we can control the nonlinear term $(\bu\nabla\bu)\ba$ by taking {\bf
necessarily} $\delta$ sufficiently small. 
When $p>2$, proceeding similarly and using also Korn's inequality, we obtain
\begin{equation}
\label{convective delta estimate}
|\di_{\Omega_t}(\bu\nabla\bu)\ba\,|
\le c\delta t^{(p-2)/p}\left(\di_{\Omega_t}|D(\bu)|^{p}\right)^{2/p}.
\end{equation}
Then, by Young inequality with $\epsilon$, we have
$$
|\di_{\Omega_t}(\bu\nabla\bu)\ba\,|
\le \epsilon \di_{\Omega_t}|D(\bu)|^p+C_\epsilon t\, ,
$$
for some new constant $C_\epsilon$. From this estimate, we can control the
nonlinear term $(\bu\nabla\bu)\ba$ by taking $\epsilon$ sufficiently
small, and so we do not need to construct the vector field $\ba$ satisfying the estimate \eqref{delta estimate} for a sufficiently small $\delta$. See Section \ref{main results} for the details. In fact, if $\ba$ is only a (smooth) bounded divergence free vector field vanishing
on $\partial\Omega$, then, by Poincar\'e, H\"older
and Korn inequalities, and the fact that our domain has uniformly bounded cross sections and $p/p'=p-1>1$ ($p>2$), we have 
\begin{equation}
\label{1 estimate}
\begin{array}{rl}
\di_{\Omega_t}|\bu|^{p'}|\ba|^{p'} &\le c \di_{\Omega_t}|\bu|^{p'}
\le c \di_{\Omega_t}|\nabla \bu|^{p'}\\
&\le c t^{1-p'/p} \left(\di_{\Omega_t}|\nabla \bu|^{p}\right)^{p'/p}\\
&= c t^{(p-2)/(p-1)} \left(\di_{\Omega_t}|\nabla \bu|^{p}\right)^{p'/p}\\
&\le c t^{(p-2)/(p-1)} \left(\di_{\Omega_t}|D(\bu)|^{p}\right)^{p'/p}
\end{array}
\end{equation}
which is \eqref{delta estimate} for $\delta=1$. 
 
\bigskip

The plan of this paper is the following. Besides this introduction, in
Section \ref{section 2} we introduce the main notations and set precisely
the problem we will solve, state a lemma about the existence of the vector
field $\ba$, carrying the flux of the fluid, and state our main theorem
(Theorem \ref{main theorem}). In Section \ref{preliminary results} we
state some preliminaries results we need to prove our 
main results. In Section \ref{main results} we prove our main 
theorem, make some remarks and prove a result about the uniqueness of our
solution. 

\section{Ladyzhenskaya-Solonnikov problem for power-law fluids}
\label{section 2}

In this section we set notations and the problem we are concerned
with and state a lemma and our main theorem.

\smallskip

We denote by $\Omega$ a domain in $\real^n$, $n=2,3$,
with a $C^\infty$ boundary, of the following type:
$$\Omega=\bigcup_{i=0}^2\Omega_i\;,$$
where $\Omega_0$ is a bounded subset of $\real^n$, while, in 
different cartesian coordinate system,
$$
\Omega_1=\{x\equiv (x_1,x^\prime)\in\real^n;\;x_1<0,\; x^\prime
\in\Sigma_1(x_1)\}
$$ 
and
$$
\Omega_2=\{x\equiv(x_1,x^\prime)\in\real^n;\;x_1>0,\;x^\prime
\in\Sigma_2(x_1)\},
$$
with $\Sigma_i(x_1)$ being $C^\infty$ simply
connected domains (open sets) in $\real^{n-1}$, and such that, for constants $l_1,l_2$,
$0<l_1<l_2<\infty$, 
they sastify
$$
 \sup_{(-1)^i x_1>0}\mbox{diam}\,\Sigma_i(x_1)\le l_2
$$
and contain the cylinders
$$
\begin{array}{l}
C_{l_1}^i=\{x\in\real^n;\;(-1)^ix_1>0\;
\mbox{e}\;|x^\prime|<\frac{l_1}{2}\}\subset\Omega_i\, ;\\
\end{array}
$$
$i=1,2$.

\smallskip

For simplicity, we will denote by $\Sigma$ any of the cross sections $\Sigma_i\equiv\Sigma_i(x_1)$ or, more generaly, any cross section of $\Omega$,
i.e., any bounded intersection of $\Omega$ with a
$(n\!-\!1)$\,-dimensional plane. We will denote by $\mathbf{n}$, the ortonormal vector to $\Sigma$ pointing from $\Omega_1$ toward $\Omega_2$
i.e. in the above local coordinate systems, we have $\mathbf{n}=(1,{0})$ (where ${0}\in\real^{n-1}$) in
both outlets $\Omega_1$ and $\Omega_2$. 
With these notations, the flux through any cross section $\Sigma$  of $\Omega$ of an incompressible fluid
in $\Omega$ with velocity field $\bv$ vanishing on $\partial\Omega$, is given by the {\lq}surface{\rq}
integral $\int_{\Sigma}\bv\cdot\bn$ (notice that
by the divergence theorem applied to the region bounded by $\partial\Omega$,
$\Sigma_1$ and $\Sigma_2$, we have $\int_{\Sigma_1}\bv\cdot\bn=\int_{\Sigma_2}\bv\cdot\bn$, for any cross sections $\Sigma_1$ and
$\Sigma_2$ of $\Omega_1$ and $\Omega_2$, respectively).

\smallskip

We remark that we take our domain $\Omega$ with only two outlets $\Omega_i$, $i=1,2$, just to 
simplify the presentation, i.e. we can take $\Omega$ with any finite number of outlets with no significant change in the notations, results and
proofs given in this paper.

\smallskip

We shall use the further notations, where $U$ is an arbitrary subdomain of $\Omega$, $s>t>0$ and $1\le q<\infty$:
$$
\begin{array}{l}
\Omega_{i,t}=\{x\in\Omega_i\,;\,(-1)^i x_1<t\}\,,\;i=1,2\\
\Omega_{i,t,s}=\Omega_{i,s}\smallsetminus\overline {\Omega}_{i,t}\\
\Omega_t=\Omega_0\cup\Omega_{1,t}\cup\Omega_{2,t}\\
\Omega_{t,s}=\Omega_s\smallsetminus\overline{\Omega}_t\\
 \parallel\mathbf{v}\parallel_{q,U}=\left(\int_U|\mathbf{v}|^q\right)^{1/q}\\
 \parallel\mathbf{v}\parallel_{1,q,U}=\left(\int_U|\mathbf{v}|^q+
|\nabla\mathbf{v}|^q\right)^{1/q}\\
 |\mathbf{v}|_{1,q,U}=\left(\int_U|\nabla\mathbf{v}|^q\right)^{1/q}\\
\left(\bu,\bv\right)=\left(\bu,\bv\right)_U= \int_U\bu\cdot\bv\\
{\mathcal D}(U)=\{\bphi\in C_c^\infty(U;\mathbb{R}^n);\,\nabla\cdot\bphi=0\}\\
{\mathcal D}_0^{1,q}(U)=\overline{{\mathcal D}(U)}^{|\cdot|_{1,q}}
\end{array}
$$
In these notations, the set $\Omega_t$ - a bounded cut of $\Omega$
with a {\lq\lq}length{\rq\rq} of order $t$ - will be taken usually
for large $t$, so this notation will not cause confusion with the
(unbounded) outlets $\Omega_i$, where $i=1,2$. 

By $W^{1,q}(U)$ and $W_0^{1,q}(U)$ we stand for the usual Sobolev spaces, 
consisting of vector or scalar valued functions, and $W_{loc}^{1,q}(\overline{U})$ is the set of functions in $W^{1,q}(V)$ for
any bounded open set $V\subset U$.
Often when it is clear from the context we will omit the domain of integration in the notations.

The notation $|E|$ will stand for the Lebesgue measure of a Lebesgue
measurable set $E$ in the dimension which is clear in the context.
Finally, the same symbol $C$, $c$, $C_{\mathbf\cdot}$ or $c_{\mathbf\cdot}$ will denote many different constants.

\medskip

In this paper, we are concerned with the following problem: given $\alpha\in\mathbb{R}$, find a velocity field $\mathbf{v}$ and a pressure $\mathcal P$ such that

\begin{equation}
\label{lady-solo pb}
\left\{
\begin{array}{c}
\begin{array}{rll}
\mbox{div}\{|D(\mathbf{v})|^{p-2}D(\bv )\}&=&\mathbf{v}\cdot\nabla\mathbf{v}+\nabla
{\mathcal P}\;\;\;\;\mbox{in}\;\Omega\\
\nabla\cdot\mathbf{v}&=&0\;\;\;\;\mbox{in}\;\Omega\\
\mathbf{v}&=&0\;\;\;\;\mbox{on}\;\partial\Omega\\
 \int_\Sigma\mathbf{v}\cdot\mathbf{n}&=&\alpha
\end{array}
\\
 \sup_{t>0}t^{-1}\int_{\Omega_t}|\nabla\mathbf{v}|^p<\infty\,.
\end{array}
\right. 
\end{equation}
Cf. {\em Problem 2.1} in \cite{ls} (for the case $p=2$).
Here, and throughout, we use the notation
$$
 \mathbf{v}\cdot\nabla\mathbf{w}= (\mathbf{v}\cdot\nabla)\mathbf{w}
=\sum_{i=1}^nv_i\frac{\partial}{\partial x_i}\mathbf{w}
=(\bv\cdot\nabla w_1,\cdots,\bv\cdot\nabla w_n)
$$
for any velocity fields $\mathbf{v}=(v_1,\cdots,v_n)$ and 
$\mathbf{w}=(w_1,\cdots,w_n)$ defined in $\Omega$ such that
the last expression on the right makes sense.

\bigskip

To solve \eqref{lady-solo pb}, 
we seek a velocity field $\mathbf{v}$ in the form $\mathbf{v}={\bu}+{\ba}$,
where ${\bu}$ is a vector field with zero flux and {\bf a} will carry
the flux $\alpha$, i.e. $\int_{\Sigma}\bu\cdot\bn=0$ and $\int_{\Sigma}\ba\cdot\bn=\alpha$. More precisely, we shall take {\bf a}
to be a vector field having the properties given by the following lemma.

\begin{lemma}
\label{a} 
For any $p\ge2$, there exists a smooth divergence free vector field $\tilde\ba$, which is bounded and has bounded derivatives in $\overline{\Omega}$, vanishes on $\partial\Omega$, and has flux one, i.e. $\int_\Sigma\tilde\ba=1$ over any cross section $\Sigma$ of $\Omega$. 
In particular, given $\alpha\in\real$, the vector field
$\ba=\alpha\tilde\ba$ is a vector field preserving all these properties
but having flux $\alpha$ and else satisfying the following estimates:

{\rm i)} \ \ \  $\int_{\Omega_t}|\ba|^{p^\prime}|\mathbf{\mathbf\varphi}|^{p^\prime}\leq
c{|\alpha|}^{p^\prime}t^{(p-2)/(p-1)}|\mathbf{\mathbf\varphi}|_{1,p,\Omega_t}^{p^\prime}, \ \ \forall\,t>0,
\ \ \forall \ \mathbf{\mathbf\varphi}\in{\mathcal D}(\Omega)$;

{\rm ii)} \ \ $\int_{\Omega_{i,t-1,t}}|\nabla\ba|^p\leq c{|\alpha|}^p,
\ \ \forall\,t\geq 1,\,i=1,2$;

%
{\rm iii)} \ $\int_{\Omega_t}|\nabla{\ba}|^p\leq c{|\alpha|}^p(t+1)\;,\ \ \forall\,t\geq1$;

\noindent
where $p'=p/(p-1)$ and $c$ is a constante depending only on $\tilde\ba$, $p$ and~$\Omega$.
\end{lemma}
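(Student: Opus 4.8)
The plan is to reduce everything to the construction of the single flux-one field $\tilde\ba$, since $\ba=\alpha\tilde\ba$ inherits by linearity all the qualitative properties (smoothness, boundedness of $\ba$ and $\nabla\ba$ on $\overline\Omega$, divergence-freeness, vanishing on $\partial\Omega$) while its flux and every norm get multiplied by $|\alpha|$; this is exactly what produces the factors $|\alpha|^{p'}$ and $|\alpha|^{p}$ in i)--iii). So the real content is (a) to build $\tilde\ba$ and (b) to check that the three estimates already hold for a field that is merely smooth, bounded with bounded gradient, divergence free, zero on the wall and of flux one.

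For the construction I would work outlet by outlet. Fix a smooth nonnegative profile $g=g(x')$ supported in the disk $\{|x'|<l_1/2\}$ and normalized by $\int g\,dx'=1$, and set $\tilde\ba=g(x')\bn$ inside each cylinder $C_{l_1}^i$, extended by zero in the rest of the outlet. Since $g$ does not depend on $x_1$, this field is divergence free; it has flux $\int_{\Sigma_i}g\,dx'=1$ across every cross section; it vanishes in a neighbourhood of the walls, because the support of $g$ lies strictly inside $\Sigma_i(x_1)$ (here one uses $C_{l_1}^i\subset\Omega_i$); and, crucially for ii)--iii), its gradient is a fixed $x_1$-independent function, hence bounded uniformly along the whole unbounded outlet. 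It then remains to join the two outlet pieces across the bounded core $\Omega_0$. I would first extend the field to a globally smooth $\bw$ that equals $g\bn$ near the two interfaces and vanishes on $\partial\Omega\cap\partial\Omega_0$; then $\mathrm{div}\,\bw$ is smooth, supported in $\Omega_0$, and has zero mean, since $\int_{\Omega_0}\mathrm{div}\,\bw=\int_{\partial\Omega_0}\bw\cdot\bn$ equals the difference of the two interface fluxes, $1-1=0$. Solving $\mathrm{div}\,\bb=-\mathrm{div}\,\bw$ in $\Omega_0$ with $\bb=0$ on $\partial\Omega_0$ by the Bogovskii operator yields a smooth $\bb$ supported in $\Omega_0$, and $\tilde\ba:=\bw+\bb$ is the desired global field, bounded with bounded derivatives because in the outlets it coincides with the fixed field $g\bn$ and on the remainder it is a fixed smooth field on a bounded set. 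The main obstacle is precisely this gluing: one must guarantee a divergence-free extension vanishing on the walls, and the flux-matching identity $1-1=0$ is exactly the compatibility condition that makes it solvable; everything else about $\tilde\ba$ is elementary, which is the simplification over the case $p=2$ where the Hopf cutoff is needed.

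Finally I would derive the estimates. Estimate i) is the chain already displayed in \eqref{1 estimate}, stopping at the $|\nabla\bphi|^p$ level (no Korn step is needed here): since $|\ba|\le c|\alpha|$ one bounds $\int_{\Omega_t}|\ba|^{p'}|\bphi|^{p'}$ by $c|\alpha|^{p'}\int_{\Omega_t}|\bphi|^{p'}$, applies Poincar\'e's inequality in the uniformly bounded cross sections (so the constant is controlled by $l_2$) to pass to $\int_{\Omega_t}|\nabla\bphi|^{p'}$, and then uses H\"older with the pair $(p/p',(p/p')')$ — legitimate because $p/p'=p-1>1$ for $p>2$ — together with $|\Omega_t|\le c\,t$ to produce the factor $t^{1-p'/p}=t^{(p-2)/(p-1)}$ and the power $\big(\int_{\Omega_t}|\nabla\bphi|^p\big)^{p'/p}=|\bphi|_{1,p,\Omega_t}^{p'}$. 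Estimates ii) and iii) are immediate from the construction: writing $|\nabla\ba|^p=|\alpha|^p|\nabla\tilde\ba|^p$ and using $|\nabla\tilde\ba|\le M$ pointwise, we get $\int_{\Omega_{i,t-1,t}}|\nabla\ba|^p\le|\alpha|^pM^p|\Omega_{i,t-1,t}|\le c|\alpha|^p$ because each slab has length one and uniformly bounded cross sections, and $\int_{\Omega_t}|\nabla\ba|^p\le|\alpha|^pM^p|\Omega_t|\le c|\alpha|^p(t+1)$ because $|\Omega_t|=|\Omega_0|+|\Omega_{1,t}|+|\Omega_{2,t}|\le c(t+1)$. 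Thus, once $\tilde\ba$ is in hand, the three bounds follow from Poincar\'e, H\"older and the linear growth of $|\Omega_t|$, with no smallness of any parameter required.
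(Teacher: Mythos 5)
Your proposal is correct and its second half (the derivation of i)--iii) from boundedness of $\tilde\ba$ and $\nabla\tilde\ba$, Poincar\'e in the uniformly bounded cross sections, H\"older with exponent $p/p'=p-1>1$, and the linear growth $|\Omega_t|\le c(t+1)$) is essentially identical to the paper's, which proves i) exactly via the chain \eqref{1 estimate} and ii), iii) by the same measure-of-slab argument. Where you genuinely diverge is the construction of $\tilde\ba$ itself. The paper builds it as a curl: in dimension two $\tilde\ba=\nabla^\perp\zeta$ with $\zeta=\psi(x_2/\rho(x))$, and in dimension three $\tilde\ba=\nabla\times(\zeta\bb)$ with $\bb$ the angle form, where $\rho$ is the regularised distance of Lemma \ref{reg dist}; divergence-freeness, unit flux (by the fundamental theorem of calculus, resp. Stokes) and vanishing near $\partial\Omega$ then come for free from $\zeta$ being locally constant near each boundary component, and the field lives in a transition band between the channel core and the wall. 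You instead place a fixed $x_1$-independent profile $g(x')\bn$ in the core cylinder $C^i_{l_1}$, which is even more elementary inside each outlet (no regularised distance needed, and boundedness of all derivatives is immediate), but it shifts the burden to the gluing across $\Omega_0$, where you must solve $\mathrm{div}\,\bb=-\mathrm{div}\,\bw$ with the compatibility $1-1=0$. That step is fine in principle, but note that Lemma \ref{div eq} only yields $\bb\in W^{1,q}_0(\Omega_0)$, whereas the Lemma asserts a \emph{smooth} field with bounded derivatives on $\overline\Omega$; you need the smoothness-preserving version of the Bogovskii operator (with divergence data compactly supported in the interior) and a cutoff so that $\bw+\bb$ matches $g\bn$ smoothly across the interfaces. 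The paper faces an analogous gluing issue and likewise omits it, referring to Galdi, so this is a presentational gap shared with the original rather than a flaw specific to your route; with that point acknowledged, your argument is a valid and somewhat more elementary alternative.
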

%

The proof of this lemma is given in Section \ref{main results}.

\bigskip

For $\mathbf{v}={\bu}+{\ba}$, problem \eqref{lady-solo pb} becomes

\begin{equation}
\label{ls for u}
\left\{\begin{array}{l}
\mbox{div}\{|D({\bu})+D({\ba})|^{p-2}(D({\bu})
+D({\ba}))\}\\
={\bu}\cdot\nabla{\bu}+{\bu}\cdot\nabla{\ba}
+{\ba}\cdot\nabla{\bu}
+{\ba}\cdot\nabla{\ba}+\nabla
{\mathcal P}\;\;\;\;\mbox{in}\;\Omega\\
\nabla\cdot{\bu}= 0\;\;\;\;\mbox{in}\;\Omega\\
{\bu}=0\;\;\;\;\mbox{on}\;\partial\Omega\\
 \int_\Sigma{\bu}\cdot\mathbf{ n}=0
\\
 \sup_{t>0}t^{-1}\int_{\Omega_t}|\nabla{\bu}|^p<\infty\,.
\end{array}\right.
\end{equation}

\noindent
Formally, multiplying \eqref{ls for u}$_1$ by $\bphi=(\varphi_1,\cdots,\varphi_n)\in{\mathcal D}(\Omega)$ and noticing that
\begin{equation}
\sum_{i,j=1}^n D({\bu})_{ij}\frac{\partial \varphi_i}{\partial
x_j}=\sum_{i,j=1}^n D({\bu})_{ij}D(\bphi)_{ij}
\end{equation}
and $\int_\Omega \nabla {\mathcal P}\cdot\bphi=-\int_\Omega
{\mathcal P}\nabla\cdot\bphi=0$, after integration by parts we get
\begin{equation}
\label{weak sense}
\begin{array}{l}
 \int_\Omega|D({\bu})+D({\ba})|^{p-2}(D({\bu})+
D({\ba})):D(\bphi)=\\
-\left({\bu}\cdot\nabla{\bu},\bphi\right)
-\left({\bu}\cdot\nabla{\ba},\bphi\right)
-\left({\ba}\cdot\nabla{\bu},\bphi\right)
-\left({\ba}\cdot\nabla{\ba},\bphi\right),
\end{array}
\end{equation}
for all $\bphi\in{\mathcal D}(\Omega)$, where for $n\times n$ matrices $A=(a_{ij}),\,B=(b_{ij})$ we use the
notation $A:B=\sum_{i,j=1}^n a_{ij}b_{ij}$.
Thus, the following definition for a {\em weak solution} to the problem \eqref{ls for u} is in order.

\begin{defi}
\label{weak solution} 
A vector field ${\bu}$ is said to be a {\em weak solution} to the problem \eqref{ls for u} if it has the following properties:
\begin{itemize}
\item[\ \ i)] ${\bu}\in W_{loc}^{1,p}(\overline{\Omega})$;
\item[\ ii)] ${\bu}$ satisfies \eqref{weak sense} for every
$\bphi\in{\mathcal D}(\Omega)$;
\item[iii)] ${\bu}$ satisfies \eqref{ls for u}$_2$-\eqref{ls for u}$_5$.
\end{itemize}
Similarly, a vector field $\bv$ is said to be a {\em weak solution} to the problem \eqref{lady-solo pb} if $\bv\in W_{loc}^{1,p}(\overline{\Omega})$
and satisfies \eqref{lady-solo pb}$_2$-\eqref{lady-solo pb}$_5$ and
\eqref{weak sense} with $\bu+\ba$ replaced by $\bv$, i.e.
$$
\int_\Omega|D(\bv)|^{p-2}D({\bv}):D(\bphi)
=-\left({\bv}\cdot\nabla{\bv},\bphi\right)
$$
for all $\bphi\in{\mathcal D}(\Omega)$.
\end{defi}

\begin{remark}
\label{derham} The use of divergence free test functions $\bphi$ in \eqref{weak sense}
eliminates the pressure $\mathcal P$, but it is a standard fact that it can be recovered due to
{\lq}De Rham's lemma{\rq} (cf. e.g. \cite[Lemma IV.1.1]{galdi}).
\end{remark}

We end this Section stating our main theorem, which we prove in Section \ref{main results}.

\begin{teo}
\label{main theorem} Let $p\ge2$.
Then, for any $\alpha\in\real$, problem \eqref{lady-solo pb} has a weak solution ${\bv}$, in the sense of Definition \ref{weak solution}.
\end{teo}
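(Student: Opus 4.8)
The plan is to follow the Ladyzhenskaya--Solonnikov scheme, adapted to the monotone $p$-structure. First I would reduce \eqref{lady-solo pb} to the zero-flux problem \eqref{ls for u} by setting $\bv=\bu+\ba$ with $\ba=\alpha\tilde\ba$ the field furnished by Lemma \ref{a}; it then suffices to produce a weak solution $\bu$ of \eqref{ls for u} in the sense of Definition \ref{weak solution}. Since weak solutions of the genuine $p$-system lack the regularity needed for the integration-by-parts estimates, I would work throughout with the regularized system \eqref{nspl modified}, solving it first on the bounded truncations $\Omega_T$ under homogeneous Dirichlet data on the \emph{whole} boundary of $\Omega_T$ (including the artificial cross-sections). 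For fixed $T$ the operator $\bu\mapsto-\,\mbox{div}\{(\tfrac1T+|D(\bu)|^{p-2})D(\bu)\}$ is, via Korn's inequality, coercive, strictly monotone and hemicontinuous on the divergence-free subspace of $W_0^{1,p}(\Omega_T)$, while the convective and lower-order terms involving $\ba$ are compact perturbations whose cubic part vanishes against $\bu$ by incompressibility; existence of $\bu_T$ then follows from the Browder--Minty theory for monotone-plus-compact operators, and the extra $\tfrac1T$-term supplies the uniform ellipticity that bootstraps $\bu_T\in W^{2,l}$, $\mathcal P_T\in W^{1,l}$ (the content of Proposition \ref{prop.1}).

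The technical heart is the uniform-in-$T$ bound. Testing \eqref{ls for u} against $\bu_T$ localized to $\Omega_t$, the cubic terms $(\bu\cdot\nabla\bu,\bu)$ and $(\ba\cdot\nabla\bu,\bu)$ vanish, and $(\bu\cdot\nabla\ba,\bu)=-(\bu\cdot\nabla\bu,\ba)$ is controlled by \eqref{convective delta estimate}; after Young's inequality with $\epsilon$ the critical contribution is absorbed into the leading term $\int_{\Omega_t}|D(\bu_T)|^p$, the remaining $\ba$-terms being $O(t)$ by the boundedness of $\ba$ together with parts (ii)--(iii) of Lemma \ref{a}. Combining this with the boundary integrals generated by the cut-off at level $t$ (here the $W^{2,l}$/$W^{1,l}$ regularity is needed to handle the stress and pressure contributions on $\Omega_{t,t+1}$) yields a reverse differential/difference inequality of Ladyzhenskaya--Solonnikov type for $y(t):=\int_{\Omega_t}|\nabla\bu_T|^p$, whose iteration forces the \emph{linear} growth
\[
\int_{\Omega_t}|\nabla\bu_T|^p\le C\,t,\qquad 1\le t\le T,
\]
with $C$ independent of $T$. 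I expect this step --- transplanting the LS iteration into the nonlinear $p$-setting, where the cut-off boundary terms must be dominated by fractional powers of $y$ --- to be the main obstacle; what keeps the argument from closing in a vicious circle is precisely the observation following \eqref{convective delta estimate} that one need no longer construct $\ba$ with a small parameter $\delta$, but only invoke Young's inequality.

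Finally I would let $T\to\infty$. The uniform linear growth bounds $\bu_T$ in $W^{1,p}(\Omega_{t_0})$ for each fixed $t_0$, so a subsequence converges weakly in $W_{loc}^{1,p}(\overline\Omega)$ and, by the compact embedding into $L_{loc}^2$ (valid for $n=2,3$ and $p\ge2$), strongly in $L_{loc}^2$; the strong convergence passes to the limit in the convective term (rewritten as $-(\bu\cdot\nabla\bphi,\bu)$ for $\bphi\in\mathcal D(\Omega)$), while the $\tfrac1T D(\bu_T)$ term drops out. For the monotone nonlinearity I would fix $\bphi\in\mathcal D(\Omega)$, observe that \eqref{weak sense} only involves a bounded subregion, and apply Minty's monotonicity trick there to identify the weak limit of $|D(\bu_T)|^{p-2}D(\bu_T)$ with $|D(\bu)|^{p-2}D(\bu)$. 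The resulting $\bu$ satisfies \eqref{weak sense}, the linear growth bound is inherited in the limit to give \eqref{ls for u}$_5$, hence $\bv=\bu+\ba$ solves \eqref{lady-solo pb}, and the pressure $\mathcal P$ is recovered by De Rham's lemma exactly as in Remark \ref{derham}.
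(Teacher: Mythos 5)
Your proposal follows essentially the same route as the paper: the decomposition $\bv=\bu+\alpha\tilde\ba$, the $\tfrac1T$-regularized truncated problem solved by Galerkin/Browder--Minty with the $W^{2,l}\times W^{1,l}$ regularity of \cite{veiga1}, the Ladyzhenskaya--Solonnikov iteration on $z(\eta)=\int_{\eta-1}^{\eta}y(t)\,dt$ to get linear growth uniformly in $T$, and a localized Minty argument in the limit $T\to\infty$. The only detail you gloss over is that identifying $\lim_k\langle A_{\zeta,k}(\bu^k),\bu^k\rangle$ in the localized Minty step requires a uniform $L^{p'}$ bound on the pressures $\mathcal{P}^k$, which the paper obtains from Lemma \ref{div eq}; this is a technical point fully consistent with your plan.
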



\section{Preliminary results}
\label{preliminary results}

$\indent$ In this Section we give some preliminary facts we shall need 
to prove our mains results in Section \ref{main results}. We begin with
Lemma \ref{ls lemma} below, which is due to Ladyzhenskaya and Solonnikov \cite[Lemma 2.3]{ls}. 
Our statement below differs slightly from \cite{ls} and, for convenience of the reader, we present its proof, which essentially can be found in \cite{ls} and \cite{fabio, fabio-tese}. 
\begin{lemma}
\label{ls lemma} Let $\Psi:\real\rightarrow\real$ be  
a strictly increasing function, $\delta$ be a number in the
interval $(0,1)$ and $t_0<T$.
 
{\em i)} If $z$ and $\varphi$ are differentiable functions in
the interval $[t_0,T]$ satisfying the inequalities
$$
\left\{\begin{array}{l}
z(t)\leq\Psi(z^\prime(t))+(1-\delta)\varphi(t),\\
\varphi(t)\geq\delta^{-1}\Psi(\varphi^\prime(t)),
\end{array}\right.
$$
for all $t\in[t_0,T]$, and $z(T)\leq\varphi(T)$, then
$$
z(t)\leq\varphi(t)\,,\;\forall\,t\in[t_0,T].
$$

{\em ii)} Suppose $\Psi(0)=0$.
If $z:[t_0,\infty)\to[0,\infty)$ is a non identically zero and non decreasing differentiable function, and satisfies the inequality $z(t)\leq\Psi\left(z^\prime(t)\right)$, for all $t\geq t_0$,
then $ \lim_{t\rightarrow\infty}z(t)=~\infty$. Besides, if
$\Psi(\tau)\leq c\tau^m$ for all $\tau\geq\tau_1$, for some constants $m>1,c>0$ and $\tau_1>0$, then 
$$
\liminf_{t\rightarrow\infty}t^{-\frac{m}{m-1}}z(t)>0\,;
$$
If, however, $\Psi(\tau)\leq c\tau$, for $\tau\geq\tau_1$, then
$$
\liminf_{t\rightarrow\infty}\mbox{e}^{-t/c}z(t)>0\,.
$$
\end{lemma}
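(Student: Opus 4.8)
The plan is to prove Lemma \ref{ls lemma} as a purely one-dimensional ODE-comparison result, treating parts (i) and (ii) separately but using (i) as the engine for (ii). For part (i), I would argue by contradiction. Suppose the conclusion fails, so the set where $z(t)>\varphi(t)$ is nonempty; since $z(T)\le\varphi(T)$, there is a largest point $t_1<T$ with $z(t_1)=\varphi(t_1)$ and $z>\varphi$ on some interval $(t_1-\eta,t_1)$. At such a crossing point the function $z-\varphi$ passes from positive to zero as $t$ increases, forcing $z'(t_1)\ge\varphi'(t_1)$ (comparing one-sided derivatives of a difference that is nonnegative to the left and zero at $t_1$). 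The idea is then to combine the two hypothesized inequalities at $t_1$: from the first, $z(t_1)\le\Psi(z'(t_1))+(1-\delta)\varphi(t_1)$, and from the second, $\delta\varphi(t_1)\ge\Psi(\varphi'(t_1))$. Using $z(t_1)=\varphi(t_1)$ and the monotonicity of $\Psi$ together with $z'(t_1)\ge\varphi'(t_1)$, I would derive $\Psi(z'(t_1))\ge\Psi(\varphi'(t_1))\ge\delta\varphi(t_1)$, and chasing the inequalities should yield a strict contradiction; the strict increase of $\Psi$ is what prevents the degenerate boundary case. I would need to take care that this crossing-point argument handles the case where $z$ touches $\varphi$ tangentially, which is where the strict monotonicity of $\Psi$ rather than mere non-decrease becomes essential.

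For part (ii), I would first establish that $\lim_{t\to\infty}z(t)=\infty$. Since $z$ is non-decreasing, nonnegative, and not identically zero, it is eventually bounded below by a positive constant; if it were bounded above it would converge to some finite limit $L>0$, forcing $z'(t)\to0$ along a sequence, so that $z(t)\le\Psi(z'(t))\to\Psi(0)=0$ by continuity, contradicting $z\to L>0$. Hence $z\to\infty$. For the quantitative lower bounds, the strategy is to run the comparison of part (i) against an explicit candidate $\varphi$. If $\Psi(\tau)\le c\tau^m$ for large $\tau$, I would guess the power solution $\varphi(t)=A\,t^{m/(m-1)}$, chosen so that the relation $\varphi(t)=\delta^{-1}\Psi(\varphi'(t))$ holds asymptotically, i.e. with $A$ tuned so that $A\,t^{m/(m-1)}$ matches a constant multiple of $(\varphi'(t))^m=(\text{const}\cdot t^{1/(m-1)})^m$; the exponents balance precisely because $\frac{m}{m-1}=1+\frac{1}{m-1}$ and multiplying the derivative exponent by $m$ reproduces $\frac{m}{m-1}$. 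Applying part (i) in reverse (now comparing $\varphi$ below $z$ on intervals $[t_0,T]$ with $T\to\infty$, or equivalently using the contrapositive form) yields $z(t)\ge\varphi(t)$ for large $t$, which is exactly $\liminf t^{-m/(m-1)}z(t)>0$.

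For the linear case $\Psi(\tau)\le c\tau$ the same scheme applies with the ansatz $\varphi(t)=A\,\mathrm{e}^{t/c}$: here $\varphi'(t)=\frac1c\varphi(t)$, so $\delta^{-1}\Psi(\varphi'(t))\le\delta^{-1}c\cdot\frac1c\varphi(t)=\delta^{-1}\varphi(t)$, and I would match the constants so that the required differential inequality for $\varphi$ holds, delivering $z(t)\ge A\,\mathrm{e}^{t/c}$ and hence $\liminf \mathrm{e}^{-t/c}z(t)>0$. The exponential is forced because the linear growth condition on $\Psi$ makes the natural comparison equation $z=c z'$, whose solutions are exponentials, exactly as the power law $m>1$ forces algebraic growth.

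The main obstacle I anticipate is organizing the application of part (i) to prove the lower bounds in part (ii): part (i) is stated as an upper comparison ($z\le\varphi$), whereas in part (ii) I need a lower bound on $z$, so I must set up the comparison with the roles arranged correctly — effectively showing that if $z$ failed to dominate the explicit subsolution $\varphi$ on any large interval, the hypothesized inequality $z\le\Psi(z')$ would be violated at a crossing point. Getting the inequality directions and the choice of the free constant $A$ consistent across the finite interval $[t_0,T]$ and then letting $T\to\infty$, while ensuring the asymptotic bound $\Psi(\tau)\le c\tau^m$ (valid only for $\tau\ge\tau_1$) is actually in force along the relevant range of $z'$, is the delicate bookkeeping; the algebra of matching exponents is routine once this framing is fixed.
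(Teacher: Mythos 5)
Your part (i) argument has a genuine gap at its core. You propose to derive the contradiction at the crossing point $t_1$ where $z(t_1)=\varphi(t_1)$ with $z>\varphi$ immediately to the left. Two things go wrong there. First, the one-sided derivative comparison is backwards: if $z-\varphi$ is positive on $(t_1-\eta,t_1)$ and vanishes at $t_1$, then the difference quotients $\frac{(z-\varphi)(t)-(z-\varphi)(t_1)}{t-t_1}$ for $t<t_1$ are $\le0$, so $z'(t_1)\le\varphi'(t_1)$, not $\ge$. Second, and more importantly, even with the signs corrected the hypotheses give no contradiction at such a point: since $z(t_1)=\varphi(t_1)$, the first inequality yields $\delta\varphi(t_1)\le\Psi(z'(t_1))$ and the second yields $\Psi(\varphi'(t_1))\le\delta\varphi(t_1)$ (note that your chain writes this hypothesis with the inequality reversed), so together $\varphi'(t_1)\le z'(t_1)$; combined with $z'(t_1)\le\varphi'(t_1)$ you only conclude $z'(t_1)=\varphi'(t_1)$, which is perfectly consistent. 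The tangential touching you flag as a corner case is in fact the generic outcome of your setup, and the strict monotonicity of $\Psi$ does not rescue it.

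The paper's proof avoids this by working at a point where $z(t_1)>\varphi(t_1)$ \emph{strictly}. There the first hypothesis gives $z(t_1)\le\Psi(z'(t_1))+(1-\delta)\varphi(t_1)<\Psi(z'(t_1))+(1-\delta)z(t_1)$, hence $\delta z(t_1)<\Psi(z'(t_1))$ strictly; chaining with $\Psi(\varphi'(t_1))\le\delta\varphi(t_1)<\delta z(t_1)$ and using strict monotonicity yields $z'(t_1)>\varphi'(t_1)$. Thus $z-\varphi$ is strictly increasing wherever it is positive, so once positive it can never come back down to zero, contradicting $z(T)\le\varphi(T)$; this is the missing idea. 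Your part (ii) is essentially workable: the paper proves $z\to\infty$ by the more direct observation that $z'\ge\Psi^{-1}(z)\ge\Psi^{-1}(z(t_1))>0$ (a linear lower bound) rather than by your bounded-limit argument, and it obtains the growth rates by integrating $z\le c(z')^m$, i.e. $z'\ge(z/c)^{1/m}$, directly, rather than by comparison against an explicit $\varphi$; your route would also succeed once the lower comparison is set up with the correct orientation, but the direct integration is shorter and sidesteps exactly the bookkeeping you anticipate as delicate.
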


\begin{proof}[\bf Proof] i) Suppose that $\varphi(t_1)<z(t_1)$ for some 
$t_1\in [t_0,T)$. Then, by the first inequality, we have $z(t_1)<\delta^{-1}\Psi(z'(t_1))$, and so, using the second inequality,
we have also  
$\delta^{-1}\Psi(\varphi'(t_1))\le\varphi(t_1)<z(t_1)<\delta^{-1}\Psi(z'(t_1))$, then, $\Psi(\varphi'(t_1))<\Psi(z'(t_1))$. 
Since $\Psi$ is strictly increasing,
it follows that $z'(t_1)>\varphi'(t_1)$. Consequently, $z(t)>\varphi(t)$
for all $t$ on a neighborhood on the right of $t_1$, and so, taking
$t_2$ to be the supremum of these points in $(t_1,T)$, we have $t_1<t_2<T$
and, by the previous reasoning, we have $z'(t)>\varphi'(t)$ for all
$t$ in $(t_1,t_2)$, but this yields a contradiction, since 
$z(t)-\varphi(t)>0$ is strictly positive at $t=t_1$ and must be zero at 
$t=t_2$. 

\smallskip

ii) Let $t_1\ge t_0$ such that $z(t_1)>0$ and $\lambda=\Psi^{-1}(z(t_1))$.
Notice that $\lambda>0$, since $\Psi(0)=0$ and $\Psi$ is strictly increasing. As $z$ is a nondecreasing function, we have that 
$z(t)\ge z(t_1)$ for all $t\ge t_1$. Then we claim that $z(t)\ge z(t_1)+\lambda (t-t_1)$ for all $t\ge t_1$. Indeed, the inequalities
$z(t)\ge z(t_1)$ and $z(t)\leq\Psi\left(z^\prime(t)\right)$ imply 
$z^\prime(t)\ge\Psi^{-1}(z(t))\ge\Psi^{-1}(z(t_1))=\lambda$. 
Thus, we have shown the first statement in part 2) of the Lemma. 
For the remainder, notice that, since $\lim_{t\to\infty}z(t)=\infty$,
there exists a $r$ such that $z(t)\ge\tau_1$
for all $t>r$, so from $\Psi(\tau)\le c\tau^m$ and $z(t)\le\Psi(z'(t))$ we have $z(t)\le c (z'(t))^m$ for all $t>r$, and the results then follow by
direct integrating this inequality.
\end{proof}

In the next lemma we collect three very useful inequalities.
The first can be found in many texts, as for instance in \cite{dibenedetto} and \cite[Lemma 2.1, p. 526]{barret-liu}. 
The third inequality contains Korn's
inequality 
(see \cite{neff}\footnote{In \cite{neff}, Korn's inequality is stated for dimension three. The result in dimension two can be obtained from the one in dimension three by extending the domain $U\subset\real^2$ to 
$U\times (0,1)$ and the vector field $\bv:U\to\real^2$ to 
$(\bv,0):U\times (0,1)\to\real^3$.}). 
The last one is a classical Poincar\'e type inequality; see e.g.
\cite[p.56]{galdi}. In these inequalities, $c_1,c_2$ 
are positive constants depending only on $p$ and, for the last two, on the domain $U$. 

\begin{lemma}
\label{inequalities}
\  
%
%

{\em \ \ i)}
$$
\langle|x|^{p-2}x-|y|^{p-2}y,x-y\rangle\geq
c_1|x-y|^2\left(|x|^{p-2}+|y|^{p-2}\right)\ge c_2|x-y|^p\, 
$$
for all $x,y\in\real^n$ and $p\ge2$.
%
%

{\em \ ii)}
$$
c_1|\mathbf{v}|_{1,p,U}\leq\parallel D(\mathbf{v})\parallel_{p,U}\leq
c_2|\mathbf{v}|_{1,p,U}\,,
$$
for all $\mathbf{v}\in{\mathcal D}^{1,p}(U)$ such that $\bv|\Gamma=0$.

{\em iii)}
$$
\parallel\mathbf{v}\parallel_{q,U}\leq
c_1\left(|\mathbf{v}|_{1,q,U}+\parallel\mathbf{v}\parallel_{1,\Gamma}\right),  
$$
for all $\mathbf{v} \in W^{1,q}(U)$.
In {\em ii)} and {\em iii)}, $U$ is an arbitrary bounded domain of $\mathbb{R}^n, \, n=2,3$, with a smooth boundary, $\Gamma$ is any
Lebesgue measurable subset of $\partial U$ with positive measure, and 
$1\leq p<\infty$.
\end{lemma}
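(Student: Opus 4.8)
The plan is to treat the three inequalities separately: I would prove (i) by hand, deduce (ii) from a classical Korn inequality, and obtain (iii) by a standard compactness argument.

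For (i), I would integrate along the segment from $y$ to $x$. Writing $\xi(t)=y+t(x-y)$, one has
\begin{equation*}
|x|^{p-2}x-|y|^{p-2}y=\int_0^1\frac{d}{dt}\big(|\xi(t)|^{p-2}\xi(t)\big)\,dt,
\end{equation*}
and the integrand equals $|\xi|^{p-2}(x-y)+(p-2)|\xi|^{p-4}(\xi\cdot(x-y))\,\xi$. Taking the inner product with $x-y$ gives $|\xi|^{p-2}|x-y|^2+(p-2)|\xi|^{p-4}(\xi\cdot(x-y))^2$, whose second summand is nonnegative because $p\ge2$; hence
\begin{equation*}
\langle|x|^{p-2}x-|y|^{p-2}y,\,x-y\rangle\ge|x-y|^2\int_0^1|\xi(t)|^{p-2}\,dt.
\end{equation*}
The only genuinely technical step is the lower bound $\int_0^1|\xi(t)|^{p-2}\,dt\ge c\,(|x|^{p-2}+|y|^{p-2})$ with $c=c(p)$, which I would obtain by noting that on a subinterval of fixed length adjacent to whichever endpoint realizes $\max\{|x|,|y|\}$ the norm $|\xi(t)|$ is bounded below by a fixed multiple of $\max\{|x|,|y|\}$; this yields the first inequality. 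The second then follows at once from $|x|^{p-2}+|y|^{p-2}\ge\max\{|x|,|y|\}^{p-2}\ge(|x-y|/2)^{p-2}$, valid because $\max\{|x|,|y|\}\ge|x-y|/2$ and $p\ge2$. Alternatively, (i) can simply be quoted from \cite{dibenedetto} or \cite[Lemma 2.1]{barret-liu}.

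For (ii), the right-hand inequality is elementary: since each entry of $D(\bv)$ is an average of two entries of $\nabla\bv$, we have $|D(\bv)|\le|\nabla\bv|$ pointwise and therefore $\|D(\bv)\|_{p,U}\le|\bv|_{1,p,U}$, so one may take $c_2=1$. The left-hand inequality is the substantive part, namely the $L^p$ Korn inequality for vector fields vanishing on the portion $\Gamma$ of $\partial U$; I would quote it directly from \cite{neff}, with the reduction from $n=3$ to $n=2$ carried out exactly as in the footnote. For (iii), I would argue by contradiction and compactness: if the estimate failed there would exist $\bv_k\in W^{1,q}(U)$ with $\|\bv_k\|_{q,U}=1$ and $|\bv_k|_{1,q,U}+\|\bv_k\|_{1,\Gamma}\to0$; since $(\bv_k)$ is bounded in $W^{1,q}(U)$, the Rellich--Kondrachov theorem gives a subsequence converging strongly in $L^q(U)$, and in $L^1(\Gamma)$ by trace compactness, to some $\bv$ with $\|\bv\|_{q,U}=1$, $\nabla\bv=0$ a.e., and zero trace on $\Gamma$. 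Then $\bv$ is constant on the connected domain $U$ and its trace vanishes on the positive-measure set $\Gamma$, forcing $\bv=0$ and contradicting $\|\bv\|_{q,U}=1$; this is the classical Poincar\'e-type inequality of \cite[p.\,56]{galdi}.

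The main obstacle is the lower estimate of $\int_0^1|\xi(t)|^{p-2}\,dt$ in (i); the remaining steps are either elementary (the upper Korn bound) or direct invocations of the cited literature (Korn in \cite{neff}, Poincar\'e in \cite{galdi}).
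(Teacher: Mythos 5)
Your proposal is correct, but it takes a more self-contained route than the paper, which offers no proof of this lemma at all: the paper simply records that (i) is standard (citing \cite{dibenedetto} and \cite[Lemma 2.1]{barret-liu}), that (ii) is Korn's inequality from \cite{neff} (with the $n=2$ case obtained by the extension trick in the footnote), and that (iii) is the classical Poincar\'e-type inequality in \cite[p.~56]{galdi}. You instead prove (i) by the segment-integration identity and prove (iii) by compactness, quoting the literature only for the genuinely hard half of (ii). Your argument for (i) is sound: the integrand $|\xi|^{p-2}|x-y|^2+(p-2)|\xi|^{p-4}(\xi\cdot(x-y))^2$ is nonnegative termwise for $p\ge2$ (and the map $s\mapsto|s|^{p-2}s$ is $C^1$ for $p>2$ and linear for $p=2$, so the fundamental-theorem identity is legitimate even if the segment passes through the origin), and your lower bound for $\int_0^1|\xi(t)|^{p-2}\,dt$ via a fixed subinterval near the endpoint of larger norm gives exactly $c(p)\,(|x|^{p-2}+|y|^{p-2})$. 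One small remark on (iii): you do not actually need compactness of the trace operator (which is delicate for $q=1$); since $\nabla\bv_k\to0$ in $L^q$ and $\bv_k\to\bv$ in $L^q$ by Rellich--Kondrachov, the convergence is strong in $W^{1,q}(U)$, and mere continuity of the trace already forces $\bv|_\Gamma=0$. What your approach buys is transparency and explicit constants for (i); what the paper's approach buys is brevity, since all three facts are indeed textbook results.
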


\medskip

Next, we state a corollary of Brouwer fixed point theorem. 

\begin{lemma}
\label{brouwer} 
Let $F:\real^n\,\longrightarrow\,\real^n$ be a continuous map such that for some $\rho>0$, $F(\xi)\cdot\xi\geq0$ for all $\xi\in\real^n$ with
$|\xi|=\rho$. Then, there is a $\xi_0\in\real^n$ with $|\xi_0|\leq\rho$
such that $F(\xi_0)=0$.
\end{lemma}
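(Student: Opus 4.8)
The plan is to argue by contradiction using the Brouwer fixed point theorem. Suppose, to the contrary, that $F(\xi)\neq 0$ for every $\xi$ in the closed ball $\overline{B}_\rho=\{\xi\in\real^n;\,|\xi|\le\rho\}$. Then I would introduce the auxiliary map
$$
G(\xi)=-\rho\,\frac{F(\xi)}{|F(\xi)|}\,,
$$
which is well defined and continuous on $\overline{B}_\rho$ precisely because $F$ does not vanish there. Since $|G(\xi)|=\rho$ for every $\xi$, the map $G$ sends $\overline{B}_\rho$ continuously into itself (in fact into its boundary sphere).

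Next I would apply Brouwer's fixed point theorem to $G$ on the compact convex set $\overline{B}_\rho$, obtaining a point $\xi_0\in\overline{B}_\rho$ with $G(\xi_0)=\xi_0$. Because the image of $G$ lies entirely on the sphere of radius $\rho$, this forces $|\xi_0|=\rho$, so that the sign hypothesis $F(\xi_0)\cdot\xi_0\ge 0$ is available precisely at the point $\xi_0$.

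Finally, I would take the inner product of the fixed-point relation $\xi_0=G(\xi_0)$ with $\xi_0$ itself:
$$
\rho^2=|\xi_0|^2=\xi_0\cdot\xi_0=G(\xi_0)\cdot\xi_0
=-\rho\,\frac{F(\xi_0)\cdot\xi_0}{|F(\xi_0)|}\,.
$$
The left-hand side is strictly positive, whereas the hypothesis $F(\xi_0)\cdot\xi_0\ge 0$ together with $\rho>0$ and $|F(\xi_0)|>0$ makes the right-hand side nonpositive, a contradiction. Hence $F$ must vanish at some $\xi_0$ with $|\xi_0|\le\rho$, which is the assertion.

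As for difficulty, this argument is essentially routine once one recalls the standard construction that, in the absence of a zero, pushes every point out to the boundary sphere. The only step deserving a moment's care is the observation that any fixed point of $G$ is automatically forced onto the boundary, since it is exactly this fact that activates the hypothesis $F(\xi)\cdot\xi\ge 0$ (which is assumed only on $|\xi|=\rho$); no deeper obstacle arises.
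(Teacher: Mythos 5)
Your proof is correct: assuming $F$ has no zero in $\overline{B}_\rho$, the map $G(\xi)=-\rho F(\xi)/|F(\xi)|$ is continuous from the closed ball into its boundary sphere, so Brouwer gives a fixed point $\xi_0$ with $|\xi_0|=\rho$, and pairing $\xi_0=G(\xi_0)$ with $\xi_0$ contradicts $F(\xi_0)\cdot\xi_0\ge0$. The paper itself offers no proof, deferring to Galdi and Evans, and your argument is exactly the standard one found in those references, so there is nothing to add.
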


For a proof, see \cite[Lemma VIII.3.1]{galdi} or 
\cite[p.\! 493]{evans}. 

\smallskip

The next lemma yields a solution $\mathbf{v}$ of the equation $\mbox{div}\,\mathbf{v}=f$ satisfying a nice estimate. This result is 
an important step in the proof of our main theorem.

\begin{lemma}
\label{div eq} Let $U$ be a locally Lipschtzian and bounded domain in $\real^n,\,n\geq 2$, and $1<q<\infty$. Then there is a constant $c$ such that, for any $f\in L^q(U)$ satisfying $\int_Uf=0$, there is a vector
field $\bv\in W_0^{1,q}(U)$ such that $\nabla\cdot\mathbf{v}=f$
and $\parallel\mathbf{v}\parallel_{1,q,U}\leq c\parallel f\parallel_{q,U}$.
\end{lemma}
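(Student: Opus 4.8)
The plan is to construct the solution through the explicit integral operator of Bogovskii, whose boundedness $L^q \to W_0^{1,q}$ rests on Calder\'on--Zygmund theory, and then to pass from the model (star-shaped) case to a general bounded Lipschitz domain by a partition of unity. First I would treat the model case: $U$ star-shaped with respect to an open ball $B \Subset U$. Fix $\omega \in C_c^\infty(B)$ with $\int_B \omega = 1$, and for $f \in C_c^\infty(U)$ with $\int_U f = 0$ define
$$
\bv(x) = \int_U f(y)\,\frac{x-y}{|x-y|^n}\int_{|x-y|}^{\infty}\omega\!\Big(y + r\,\tfrac{x-y}{|x-y|}\Big)\, r^{n-1}\,dr\,dy .
$$
Passing to polar coordinates centered at $x$ and using the star-shapedness together with $\int_U f = 0$, a direct computation gives $\nabla\cdot \bv = f$ and shows that $\bv$ has compact support in $U$, so that $\bv \in C_c^\infty(U) \subset W_0^{1,q}(U)$.

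The crucial point is the estimate. Differentiating under the integral sign, each $\partial_j v_i$ splits into a genuinely singular part, with kernel homogeneous of degree $-n$ in $x-y$ and having zero average over spheres, plus a weakly singular remainder with integrable kernel. The singular part is a Calder\'on--Zygmund operator once one verifies the cancellation and H\"ormander conditions, hence bounded on $L^q(U)$ for $1<q<\infty$; the remainder maps $L^q$ to $L^q$ by Young's inequality. This yields $|\bv|_{1,q,U} \le c\,\|f\|_{q,U}$, and combining it with the Poincar\'e inequality of Lemma \ref{inequalities}~iii) (applied with $\bv|_{\partial U}=0$, so the boundary term drops) gives $\|\bv\|_{1,q,U} \le c\,\|f\|_{q,U}$ with $c=c(q,U)$. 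Since $\{f \in C_c^\infty(U):\int_U f=0\}$ is dense in $\{f\in L^q(U):\int_U f=0\}$, the operator $f\mapsto\bv$ extends to all admissible $f$, preserving both the identity $\nabla\cdot\bv=f$ and the bound.

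Finally I would globalize. A bounded Lipschitz domain is a finite union $U=\bigcup_{k=1}^N U_k$ of subdomains each star-shaped with respect to a ball. Taking a partition of unity $\{\phi_k\}$ subordinate to an associated cover, I write $f=\sum_k f_k$ with each $f_k$ supported in $U_k$ and of zero mean: the naive pieces $\phi_k f$ need not integrate to zero, so one redistributes the residual masses $\int \phi_k f$ (whose total is $\int_U f=0$) through the overlaps using fixed auxiliary functions, obtaining $f_k$ with $\int_{U_k}f_k=0$ and $\|f_k\|_{q}\le c\,\|f\|_{q}$. Solving $\nabla\cdot\bv_k=f_k$ in each $U_k$ by the model case and setting $\bv=\sum_k \bv_k$ (extended by zero) produces the desired field and estimate.

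I expect the main obstacle to be the Calder\'on--Zygmund estimate for the model operator: identifying the singular kernel and verifying its cancellation so that it defines a bounded operator on $L^q$ is where the restriction $1<q<\infty$ and the entire analytic weight of the lemma reside; the partition-of-unity step is technically fiddly but routine once the mass redistribution is arranged. There is a functional-analytic alternative --- surjectivity of $\nabla\cdot$ onto $\{f:\int_U f=0\}$ with a bounded right inverse is equivalent, by the closed-range theorem, to the Ne\v{c}as inequality $\|p\|_{L^{q'}/\real}\le c\,\|\nabla p\|_{W^{-1,q'}}$ for the gradient --- but proving that inequality in the Lipschitz setting again proceeds through the Bogovskii construction, so it does not genuinely bypass the hard step.
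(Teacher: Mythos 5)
Your proposal is correct and coincides with the paper's treatment: the paper does not prove this lemma but simply cites \cite[Theorem III.3.2]{galdi}, whose proof is exactly the Bogovskii-operator argument you outline (star-shaped model case via the explicit integral kernel and Calder\'on--Zygmund theory, then globalization to Lipschitz domains by decomposition into star-shaped pieces with mass redistribution). Nothing further is needed.
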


See \cite[Teorema III.3.2]{galdi}. 

\medskip

The final result of this Section regards the {\em regularised distance} function to the 
boundary of a domain (an open connected set) in $\mathbb{R}^n$.

\begin{lemma}
\label{reg dist}
Let $V$ be a domain in $\real^n$ and
$\mbox{d}(x)=\mbox{dist}(x,\partial V)$, $x\in V$. 
Then, there is a function
$\rho\in C^\infty(V)$ such that for every $x\in V$ and any
derivative $\partial^\beta$, 
$\beta=(\beta_1,\cdots,\beta_n)\in{\mathbb Z}_{+}$, 
we have
\begin{equation}
\begin{array}{l}
\ \ \ \ \ \, \mbox{d}(x)\leq\rho(x) \ \ \ \ \mbox{and}\\
|\partial^\beta\rho(x)|\leq
k_\beta(\mbox{d}(x))^{1-|\beta|},
\end{array}\label{eq.2}
\end{equation}
where $k_{\beta}$ is a constant depending only on $\beta$ and $n$.
\end{lemma}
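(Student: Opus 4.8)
The plan is to realise $\rho$ as the \emph{regularised distance} of Whitney and Stein, obtained from a Whitney decomposition of $V$ together with an adapted smooth partition of unity; the point of the lemma is that the constants $k_\beta$ can be taken independent of $V$. First I would invoke the Whitney covering lemma to write $V=\bigcup_k Q_k$ as a countable union of closed dyadic cubes with pairwise disjoint interiors satisfying
$$\mbox{diam}\,Q_k\le\mbox{dist}(Q_k,\partial V)\le 4\,\mbox{diam}\,Q_k.$$
Dilating each cube about its centre to $Q_k^{*}=\tfrac{9}{8}Q_k$, the enlarged cubes still cover $V$ and have the two combinatorial properties that drive everything: \textbf{bounded overlap} (every $x\in V$ lies in at most $N=N(n)$ of the $Q_k^{*}$), and \textbf{comparability}, namely there are constants $a_1,a_2>0$ depending only on $n$ with
$$a_1\,\mbox{diam}\,Q_k\le \mbox{d}(x)\le a_2\,\mbox{diam}\,Q_k\qquad\text{whenever}\ x\in Q_k^{*}.$$
Both follow from the Whitney property by an elementary estimate on distances.

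Next I would build the partition of unity. Fix once and for all a bump $\varphi_0\in C_c^\infty(\real^n)$ with $\varphi_0\equiv1$ on the unit cube and $\mbox{supp}\,\varphi_0\subset\tfrac{9}{8}(\text{unit cube})$, and let $\psi_k$ be its image under the affine dilation carrying the unit cube onto $Q_k$. The chain rule gives the scale-invariant bounds $|\partial^\beta\psi_k(x)|\le A_\beta\,(\mbox{diam}\,Q_k)^{-|\beta|}$, with $A_\beta$ depending only on $\beta$ and $\varphi_0$. The sum $S=\sum_k\psi_k$ is locally finite, hence $C^\infty$, and satisfies $1\le S\le N$ on $V$ (the lower bound because the undilated $Q_k$ already cover $V$ and $\psi_k\equiv1$ on $Q_k$, the upper bound by bounded overlap). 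Setting $\varphi_k=\psi_k/S$ yields a smooth partition of unity $\sum_k\varphi_k\equiv1$ on $V$ subordinate to $\{Q_k^{*}\}$; the quotient rule together with $S\ge1$, the bounds on $\partial^\gamma\psi_k$, comparability, and bounded overlap gives $|\partial^\beta\varphi_k(x)|\le A_\beta'\,(\mbox{diam}\,Q_k)^{-|\beta|}$ with $A_\beta'$ depending only on $\beta$ and $n$. I would then define
$$\rho(x)=a_2\sum_k(\mbox{diam}\,Q_k)\,\varphi_k(x),$$
which is $C^\infty(V)$ by local finiteness.

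It remains to check the two estimates, and here comparability does all the work. At a fixed $x$ only indices with $x\in Q_k^{*}$ contribute, and for these $\mbox{d}(x)/a_2\le\mbox{diam}\,Q_k\le\mbox{d}(x)/a_1$. For $|\beta|=0$ this gives $\rho(x)\ge a_2\,(\mbox{d}(x)/a_2)\sum_k\varphi_k(x)=\mbox{d}(x)$ (and likewise $\rho(x)\le(a_2/a_1)\mbox{d}(x)$, so $\rho$ is genuinely comparable to $\mbox{d}$, the normalisation $a_2$ being chosen precisely to force $\mbox{d}\le\rho$). For $|\beta|\ge1$,
$$|\partial^\beta\rho(x)|\le a_2\sum_{k:\,x\in Q_k^{*}}(\mbox{diam}\,Q_k)\,|\partial^\beta\varphi_k(x)|\le a_2\,N\,A_\beta'\max_{k}(\mbox{diam}\,Q_k)^{1-|\beta|},$$
and since $1-|\beta|\le0$ and $\mbox{diam}\,Q_k\ge\mbox{d}(x)/a_2$ we get $(\mbox{diam}\,Q_k)^{1-|\beta|}\le a_2^{|\beta|-1}(\mbox{d}(x))^{1-|\beta|}$, whence $|\partial^\beta\rho(x)|\le k_\beta(\mbox{d}(x))^{1-|\beta|}$ with $k_\beta$ depending only on $\beta$ and $n$. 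One may also exploit the cancellation $\sum_k\partial^\beta\varphi_k\equiv0$ to replace $\mbox{diam}\,Q_k$ by $\mbox{diam}\,Q_k-\mbox{d}(x)$ in the sum, but comparability already suffices.

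The main obstacle is not any single estimate but the Whitney bookkeeping: one must choose the dilation factor so that the $Q_k^{*}$ simultaneously cover $V$, overlap only $N(n)$ at a time, and keep $\mbox{diam}\,Q_k$ comparable to $\mbox{d}(x)$ on their supports. Once these purely geometric facts are secured, the partition of unity and the uniform bounds on its derivatives make the two required inequalities essentially automatic. Since the construction is classical, in the write-up one could alternatively cite Stein's regularised-distance theorem directly and rescale to arrange $\mbox{d}\le\rho$.
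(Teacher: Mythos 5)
Your construction is correct and coincides with the paper's proof, which simply cites Stein's regularised-distance theorem (\cite[Theorem VI.2]{stein}); the Whitney decomposition, the subordinate partition of unity with scale-invariant derivative bounds, and the comparability of $\mbox{diam}\,Q_k$ with $\mbox{d}(x)$ on the dilated cubes are exactly the ingredients of that cited argument. Your normalisation by $a_2$ to force $\mbox{d}\le\rho$ is the only (harmless) adjustment needed beyond quoting Stein verbatim.
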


See 
\cite[Theorem VI.2]{stein}.






\section{Proof of Theorem \ref{main theorem} and other results}
\label{main results}

In this section we prove Lemma \ref{a} and our main theorem - Theorem \ref{main theorem}. Besides, we make some remarks, prove
a Proposition on the {\lq}uniform{\rq} distribution of 
energy dissipation (Proposition \ref{prop.2}) and a Theorem
regarding the uniqueness of solution of problem \eqref{lady-solo pb}.

\smallskip

We begin by proving Lemma \ref{a}.
As we observed in the Introduction, the proof of this lemma (the construction of $\ba$) is simpler in this paper (i.e. for the case $p>2$) than
for the classical one for newtonian fluids ($p=2$). For the construction
in the case $p=2$, see \cite[p.744]{ls} and references therein; see also
\cite[Lemma XI.7.1, p. 272]{galdi} and \cite[p. 46]{passerini}.


\begin{proof}[\bf Proof of Lemma \ref{a}.] Suppose we have a vector field
$\tilde\ba$ as in Lemma \ref{a}. Then the statements with respect to
$\ba=\alpha\tilde\ba$ follow, with $c$ depending on $p$, $\sup_{|x_1|>0}|\Sigma|$, $\sup_\Omega |\tilde\ba|$ and
$\sup_\Omega |\nabla\tilde\ba|$. Indeed, for property Lemma \ref{a}\,i), see 
\eqref{1 estimate}. For property ii), we have
$$
\begin{array}{c}
\int_{\Omega_{i,t-1,t}}|\nabla\ba|^p
\leq (\sup|\nabla\tilde\ba|^p)(\sup|\Sigma|){|\alpha|}^p
\end{array}
$$
and iii) follows from ii):
$$
\begin{array}{rl}
\int_{\Omega_t}|\nabla\ba|^p
&=\ \int_{\Omega_0}|\nabla\ba|^p
  +\sum_{i=1,2}\int_{\Omega_{i,t}}|\nabla\ba|^p\\
&\le |\Omega_0|(\sup|\nabla\tilde\ba|^p){|\alpha|}^p
  +\left( (\sup|\nabla\tilde\ba|^p)(\sup|\Sigma|)\right){|\alpha|}^pt.
\end{array}
$$
To construct a vector field
$\tilde\ba$ with the properties in the statement of Lemma \ref{a},
first we observe that it is enough to
construct in each outlet $\Omega_i$ a vector field $\mathbf{a}^i$
satisfying these properties in $\Omega_i$. Indeed, if we have this, then we
can obtain the desired vector field $\tilde\ba$ defined in $\Omega$ by
using appropriate cutoff functions. We omit this part of the proof and
refer to \cite[cap.VI]{galdi} for a similar
procedure in a domain with straight outlets and Poiseuille flows in place
of the vector fields ${\ba}^i$, to be constructed below.

We first construct $\tilde\ba$ in the case $n=2$.
By what we observed above, it is enough to
construct the vector field $\tilde\ba$ in an arbitrary outlet $\Omega_i$,
which we shall denote by $\Omega$ in this proof. 
Without loss of generality, we take 
$\Omega=\{x=(x_1,x_2)\in\real^2\, ;\, f_1(x_1)<x_2<f_2(x_1)\}$ for
smooth functions $f_1, f_2$ 
such that 
$f_1(x_1)\le-\frac{l_1}{2}$, \ $\frac{l_1}{2}\le f_2(x_1)$ 
and $f_2(x_1)-f_1(x_1)\le l_2$, for all $x_1\in\real$.  ($l_1<l_2$ are positive
numbers introduced in Section \ref{section 2}.) 
Then we set
%
$$
\tilde\ba=\nabla^\perp\zeta\equiv
(\partial_{x_2}\zeta,-\partial_{x_1}\zeta),
$$
for $\zeta(x_1,x_2)=\psi(x_2/\rho(x))$, where $\rho(x)$ is the regularised
distance to $\partial\Omega$ (see Lemma \ref{reg dist}) and $\psi:\real\to\real$ is a smooth
nondecreasing function such that $\psi(s)=0$ if $s<0$ and $1$, if $s>1$.
We notice that $\zeta$ is identically zero in the {\lq}lower band{\rq}
$\{x\in\Omega\, ;\, f_1(x_1)<x_2<0\}$ and identically one in a neighborhood of the {\lq}upper boundary{\rq} 
$\{x\in\partial\Omega\, ;\, x_2=f_2(x_1)\}$. In particular, $\tilde\ba$ is
a divergence free bounded vector field vanishing on a neighborhood of 
$\partial\Omega$ and  
%
$$
\int_{\Sigma}\tilde\ba\cdot\bn
=\int_{\Sigma}\zeta_{x_2}dx_2\\
=\zeta(x_1,f_2(x_1))-\zeta(x_1,f_1(x_1))=1\,.
$$
Now, because $\zeta$ is constant in a neighborhood of each of the two
components of  
$\partial\Omega$, we have that any derivative of $\zeta$ is zero
in this neighborhood and, thus, bounded in $\Omega$. 
Then $\tilde\ba$ and its derivatives are bonded function in $\Omega$.

\smallskip

In the case $n=3$, we take $\zeta(x_1,x')=\psi(|x'|/\rho(x))$,
$x'\equiv (x_2,x_3)\in\real^2$, where $\rho(x)$ is the regularised distance to $\partial\Omega$ (see Lemma \ref{reg dist}), $\psi$ is as above, but
$\psi(s)=0$ if $s<1$ and $1$, if $s>2$. Then we set
$$
\tilde\ba=\nabla\times(\zeta\bb)=(\nabla\zeta)\bb,
$$
where $\bb$ is the angle form in $\real^2$, i.e.  $\bb(x_2,x_3)=\frac{1}{2\pi(x_2^2+x_3^2)}(-x_3,x_2)$. Notice 
that $\zeta$ constant for $x'$ close to zero and equal to one in a neighborhood
of $\partial\Omega$ (i.e. $\rho(x)$ close to zero), and thus, $\zeta$
is a smooth function with bounded derivatives, vanishing in neighborhoods
of $x'=0$ and $\partial\Omega$. Therefore, $\tilde\ba$ is a smooth function vector with bounded derivatives. Beside, it is divergence free,
and, by Stokes theorem in the plane, we have
$\int_{\Sigma}\tilde\ba\cdot\bn=\int_{\partial\Sigma}\bb d\sigma=1$.
\end{proof}

\smallskip

To solve problem \eqref{ls for u}, first we shall solve the truncated
modified problem, $T>0$: 
\begin{equation}
\label{truncated}
\left\{\begin{array}{l}
\mbox{div}\{\big(\frac{1}{T}+|D(\bu)+D(\ba)|^{p-2}\big)(D(\bu)+D({\ba}))\}\\
={\bu}\cdot\nabla{\bu}+{\bu}\cdot\nabla{\ba}
+{\ba}\cdot\nabla{\bu}+{\ba}\cdot\nabla{\ba}+\nabla
{\mathcal P}\;\;\;\;\mbox{in}\;\Omega_T\\
\nabla\cdot{\bu}=0\;\;\;\;\mbox{in}\;\Omega_T\\
{\bu}=0\;\;\;\;\mbox{on}\;\partial\Omega_T\\
\end{array}\right. 
\end{equation}

Then we will use Lemma \ref{ls lemma} to obtain a weak solution
of \eqref{ls for u} by taking the limit, when $T\to\infty$, in the solution $\bu^T$ of \eqref{truncated}, extended by zero outside $\Omega_T$.

\begin{prop}
\label{prop.1} Let $p\ge2$ and $T>0$. Then problem \eqref{truncated} has a solution $(\bu^T,{\mathcal P})$ in 
${\mathcal D}_0^{1,p}(\Omega_T)\times L^p(\Omega_T)
\cap W^{2,l}(\Omega_t)\times W^{1,l}(\Omega_t)$, 
for any $t\in(0,T)$, where $l=2q/(p+q-2)$, being $q=2p+2$ if $n=3$
and any number in $[1,\infty)$ if $n=2$.  
\end{prop}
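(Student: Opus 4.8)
The plan is to treat \eqref{truncated} as a nonlinear Stokes-type system on the \emph{bounded} domain $\Omega_T$: first produce a weak solution by a monotonicity/compactness argument, then upgrade its regularity by exploiting that the modified operator $\xi\mapsto(\frac1T+|\xi|^{p-2})\xi$ is \emph{uniformly} elliptic, with smallest eigenvalue of its linearization bounded below by $1/T$. I would begin with the weak formulation of \eqref{truncated} over ${\mathcal D}(\Omega_T)$, seeking $\bu\in{\mathcal D}_0^{1,p}(\Omega_T)$ satisfying the corresponding integral identity for all $\bphi\in{\mathcal D}(\Omega_T)$. To construct such a $\bu$ I would run a Galerkin scheme on finite dimensional subspaces spanned by elements of ${\mathcal D}(\Omega_T)$: the associated map $F:\real^N\to\real^N$ is continuous, and testing with the approximation $\bu_N$ itself gives $F(\xi)\cdot\xi\ge0$ on a large sphere. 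Here the cubic terms $(\bu_N\cdot\nabla\bu_N,\bu_N)$ and $(\ba\cdot\nabla\bu_N,\bu_N)$ vanish because $\nabla\cdot\bu_N=\nabla\cdot\ba=0$, while $(\bu_N\cdot\nabla\ba,\bu_N)$ and $(\ba\cdot\nabla\ba,\bu_N)$ are absorbed into the leading coercive term $\|D(\bu_N)+D(\ba)\|_{p,\Omega_T}^p$ via Lemma \ref{a}(i), Korn's inequality (Lemma \ref{inequalities}(ii)) and Young's inequality. Lemma \ref{brouwer} then yields $\bu_N$, and the same test function provides the uniform bound $\|\bu_N\|_{1,p,\Omega_T}\le C$.

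Next I would pass to the limit along a weakly convergent subsequence $\bu_N\rightharpoonup\bu$ in $W_0^{1,p}(\Omega_T)$. Since $\Omega_T$ is bounded, the compact embedding $W_0^{1,p}(\Omega_T)\hookrightarrow\hookrightarrow L^r$ makes the convective terms converge. The genuinely nonlinear viscous term is handled by Minty's trick: the strict monotonicity expressed in Lemma \ref{inequalities}(i) lets one identify the weak limit of $(\frac1T+|D(\bu_N)+D(\ba)|^{p-2})(D(\bu_N)+D(\ba))$ with the corresponding expression in $\bu$. This gives a weak solution $\bu\in{\mathcal D}_0^{1,p}(\Omega_T)$, and the pressure ${\mathcal P}\in L^p(\Omega_T)$ is recovered from the vanishing of the resulting functional on divergence free fields through De Rham's lemma (Remark \ref{derham}) together with the solvability of $\mbox{div}\,\bv=f$ in Lemma \ref{div eq}.

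For the regularity I set $\bw=\bu+\ba$ and carry out the standard difference quotient argument for uniformly elliptic monotone systems: testing the equation with second order difference quotients of $\bu$ and invoking the pointwise inequality in Lemma \ref{inequalities}(i), one obtains, in the interior and tangentially up to the smooth part of $\partial\Omega$, that $|D(\bw)|^{(p-2)/2}D(\bw)\in W^{1,2}$, hence $|D(\bw)|^{(p-2)/2}|D^2(\bw)|\in L^2$, and the improved integrability $D(\bw)\in L^q$ with $q=2p+2$ for $n=3$ (any finite $q$ for $n=2$) by Sobolev embedding. Writing the worst term of $\mbox{div}\{(\frac1T+|D(\bw)|^{p-2})D(\bw)\}$ as $|D(\bw)|^{p-2}D^2(\bw)=|D(\bw)|^{(p-2)/2}\cdot\bigl(|D(\bw)|^{(p-2)/2}D^2(\bw)\bigr)$ and applying H\"older's inequality with exponents $2q/(p-2)$ and $2$ gives that this term lies in $L^l$ with $\frac1l=\frac12+\frac{p-2}{2q}$, i.e. exactly $l=2q/(p+q-2)$, and one checks $1<l\le2$. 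Finally I rewrite \eqref{truncated} as the Stokes system $-\frac1{2T}\Delta\bu+\nabla{\mathcal P}=\mathbf{g}$, $\nabla\cdot\bu=0$, whose right hand side collects the nonlinear stress divergence and the convective terms and lies in $L^l(\Omega_t)$; the $L^l$ regularity theory for the Stokes operator (see \cite{galdi}) then yields $\bu\in W^{2,l}(\Omega_t)$ and ${\mathcal P}\in W^{1,l}(\Omega_t)$ for every $t<T$.

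I expect the regularity step to be the main obstacle. First, the difference quotient estimate must be pushed up to the smooth boundary $\partial\Omega$ (tangential quotients, with the normal second derivatives recovered algebraically from the equation), while the restriction $t<T$ keeps $\Omega_t$ away from the artificial cross section $\Sigma_T$, so that only interior and smooth-boundary regularity are needed. Second, the convective contribution to $\mathbf{g}$ must be controlled in $L^l$ using only the energy class $\bu\in W^{1,p}$ together with the gained integrability $D(\bw)\in L^q$; it is precisely to close this bootstrap and to keep $l>1$ so that the linear Stokes theory applies that the value $q=2p+2$ is chosen in dimension three. The $\frac1T$ term is indispensable throughout: without it the operator degenerates where $D(\bw)=0$, and the $W^{2,l}$, $W^{1,l}$ bounds that later justify the limit $T\to\infty$ would be unavailable.
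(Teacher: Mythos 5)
Your proposal is correct and follows essentially the same route as the paper: the existence part is the identical Galerkin--Brouwer construction (with the trilinear terms $(\bu_N\cdot\nabla\bu_N,\bu_N)$ and $(\ba\cdot\nabla\bu_N,\bu_N)$ vanishing and the rest absorbed by coercivity), followed by the Browder--Minty monotonicity trick to identify the weak limit of the viscous stress. The only divergence is that for the $W^{2,l}\times W^{1,l}$ regularity the paper simply invokes the proof of Theorem 1.2 in \cite{veiga1} (noting that the nonhomogeneous data $\bv=\ba$ on the artificial cross sections is harmless since $\Omega_t$ stays away from them), whereas you sketch the underlying difference-quotient and $L^l$ Stokes argument --- which is precisely the content of that reference, with the same exponents $q=2p+2$ and $l=2q/(p+q-2)$.
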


\begin{proof}[\bf Proof] 
The regularity part, i.e. $(\bu^T,{\mathcal P})\in  W^{2,l}(\Omega_t)\times W^{1,l}(\Omega_t)$, for any $t\in (0,T)$,
is a corollary of the proof of Theorem 1.2 in \cite{veiga1}. 
Notice that if $(\bu^T,{\mathcal P})$ is a weak solution with $\bu^T$ in ${\mathcal D}_0^{1,p}$ then $\bv=\bu^T+\ba$ is a weak solution in 
$W^{1,p}(\Omega_T)$ of
\begin{equation}
\label{truncated v}
\left\{\begin{array}{l}
\mbox{div}\{\big(\frac{1}{T}+|D(\bv)|^{p-2}\big)D(\bv)\} 
+{\bv}\cdot\nabla{\bv}+\nabla {\mathcal P}=0\ \ \ \mbox{in}\;\Omega_T\\
\nabla\cdot{\bv}=0\ \ \ \mbox{in}\;\Omega_T\\
{\bv}=\ba\ \ \ \mbox{on}\;\partial\Omega_T.
\end{array}\right.  
\end{equation}
The fact that we do not have here the homogeneous Dirichlet boundary
condition $\bv=0$ here in the whole boundary $\partial\Omega_T$ does not
affect the method given in \cite{veiga1} because 
$\ba=0$ in $\overline{(\partial\Omega_T)}\cap(\partial\Omega)$ and the remaing
part of $\partial\Omega_T$, i.e., $(\partial\Omega_T)/(\partial\Omega)$,
is interior to $\Omega_T$. 

Then we have only to show the existence of a weak solution for \eqref{truncated}. 
For simplicity, most of the time in this proof we shall write $\Omega_T=\Omega$ and $\bu^T=\bu$. 
Also we keep the notation $(\cdot,\cdot)$ with the integration over
$\Omega=\Omega^T$ in this proof, i.e. for (vector) functions $\bv,\bw$ such that $\bv\cdot\bw\in L^1(\Omega^T)$, $(\bv,\bw)=\int_{\Omega^T}\bv\cdot\bw$. 
We will apply the Galerkin method and the monotonicity method of
Browder-Minty (cf. \cite[Remark, p. 497]{evans}). The Browder-Minty method
is used due to the nonlinear term in the left hand side of \eqref{truncated}$_1$.

Let $\{\bphi^j;j=1,2,\cdots\}\subset{\mathcal D}(\Omega)$ be a denumerable 
and linearly independent set of functions whose linear hull is dense in
${\mathcal D}_0^{1,p}(\Omega)$. We shall write for $m=1,2,\ldots$,
\begin{equation}
{\bu}^m=\sum_{j=1}^mc_{jm}{\mathbb{\varphi}}^j\,, \label{eq.7}
\end{equation}
where $(c_{1m},\cdots,c_{mm})\in\real^m$ solves the algebraic system
\begin{equation}
\label{eq for um}
\begin{array}{c}
 \frac{1}{T}\int_\Omega (D({\bu}^m)+D({\ba})):D({\mathbb{\varphi}}^j)\\
+ \int_\Omega|D({\bu}^m)+D({\ba})|^{p-2}(D({\bu}^m)+
D({\ba})):D({\mathbb{\varphi}}^j)\\
+\left({\bu}^m\cdot\nabla{\bu}^m,{\mathbb{\varphi}}^j\right)
+\left({\bu}^m\cdot\nabla{\ba},{\mathbb{\varphi}}^j\right)
+\left({\ba}\cdot\nabla{\bu}^m,{\mathbb{\varphi}}^j\right)
+\left({\ba}\cdot\nabla{\ba},{\mathbb{\varphi}}^j\right)=0,
\end{array}
\end{equation}
$j=1,\ldots,m$.
To see that system \eqref{eq for um} has a solution $(c_{1m},\cdots,c_{mm})$, let 
${\mathbf F}=(F_1,\cdots,F_m):\real^m\to\real^m$ be the map such that,
for $\xi=(\xi_1,\ldots,\xi_m)\in\real^m$,  $F_j(\xi)$ is defined by
the left hand side of \eqref{eq for um} with ${\bu}^m=\sum_{j=1}^m\xi_j{\mathbb{\varphi}}^j$. 
By Lemma \ref{brouwer}, it is enough to show that there is a $\rho>0$
such that ${\mathbf{F}}(\xi)\cdot\xi\geq0$ for all $|\xi|=\rho$.
Since $\left({\bu}^m\cdot\nabla{\bu}^m,{\bu}^m\right)=
\left({\ba}\cdot\nabla{\bu}^m,{\bu}^m\right)=0$, we have
\begin{equation}
\label{fxixi}
\begin{array}{rl}
{\mathbf{F}}(\xi)\cdot\xi=& \frac{1}{T}\int_\Omega (D({\bu}^m)+D({\ba})):D(\bu^m)\\
&+ \int_\Omega|D({\bu}^m)+
D({\ba})|^{p-2}(D({\bu}^m)+D({\ba})):D({\bu}^m)\\
&+\left({\bu}^m\cdot\nabla{\ba},{\bu}^m\right)+
\left({\ba}\cdot\nabla{\ba},{\bu}^m\right).
\end{array}
\end{equation}
By lemmas \ref{a} and \ref{inequalities}, and H\"older and
Young inequalities, we obtain the following estimates (for some
small positive numbers $\varepsilon_i$ and some constants 
$C_{\varepsilon_i},c_{\cdot}$, which may depend on $m$):
\begin{equation}
\begin{array}{rl}
& \left|\left({\bu}^m\cdot\nabla{\ba},{\bu}^m\right)\right|=
 \left|\left({\bu}^m\cdot\nabla{\bu}^m,{\ba}\right)\right|\\
\leq &  \left|{\bu}^m\right|_{1,p}\left(\int_\Omega|{\ba}|^{p^\prime}
\left|{\bu}^m\right|^{p^\prime}\right)^{1/p^\prime}
\leq  c\,\left|{\bu}^m\right|_{1,p}^2\leq
\varepsilon_1|{\bu}^m|_{1,p}^p+C_{\varepsilon_1}; 
\end{array}
\label{eq.10}
\end{equation}
\begin{equation}
\label{eq.65}
\left|\left({\ba}\cdot\nabla{\ba},{\bu}^m\right)\right|\leq
\varepsilon_2|{\bu}^m|_{1,p}^p+C_{\varepsilon_2};
\end{equation}
$$
\begin{array}{c}
 \int_\Omega\left(\left|D({\bu}^m)+D({\ba})\right|^{p-2}\left(D({\bu}^m)+ D({\ba})\right)-|D({\ba})|^{p-2}D({\ba})\right):D({\bu}^m)\\
\geq  c_p\int_\Omega\left|D({\bu}^m) \right|^p\ge c_1|\bu^m|_{1,p}^p,
\end{array}
$$
then
\begin{equation}
\label{eq.66}
\begin{array}{rl}
& \int_\Omega\left|D({\bu}^m)+D({\ba})\right|^{p-2}\left(D({\bu}^m)+ D({\ba})\right):D({\bu}^m)\\
\geq&  c_1|{\bu}^m|_{1,p}^p+\int_\Omega|D({\ba})|^{p-2}D({\ba}):D({\bu}^m),\\
\geq&  
c_1|{\bu}^m|_{1,p}^p-\int_\Omega|D({\ba})|^{p-1}|D({\bu}^m)|\\
\geq& 
c_1|{\bu}^m|_{1,p}^p-\left(\int_\Omega|D({\ba})|^p\right)^{(p-1)/p}
\left(\int_\Omega|D({\bu}^m)|^p\right)^{1/p}\\
\geq& 
c_1|{\bu}^m|_{1,p}^p-\varepsilon_3|{\bu}^m|_{1,p}^p+C_{\varepsilon_3};
\end{array}
\end{equation}
\begin{equation}
\label{eq.67}
\begin{array}{rl}
& \int_\Omega \left(D(\bu^m)+D(\ba)\right):D(\bu^m)
= \int_\Omega |D(\bu^m)|^2+D(\ba):D(\bu^m)\\
\ge& \frac{1}{2}\int_\Omega |D(\bu^m)|^2 - \frac{1}{2}\int_\Omega |D(\ba)|^2
\ge  c_1|\bu^m|_{1,2}^2 - \frac{1}{2}\int_\Omega |D(\ba)|^2.
\end{array}
\end{equation}
Then, taking $\varepsilon_i,\;i=1,2,3,$ suficiently small, and noticing
that 
$|{\bu}^m|_{1,q}^q\ge c|{\bu}^m|_{1,2}^2\geq c_1|\xi|^2$ 
(notice that $|\xi|=|\bu^m|_{1,2}$ is a norm in $\real^m$), from \eqref{fxixi}-\eqref{eq.67} we get
$$
{\mathbf{F}}(\xi)\cdot\xi\geq c_1|\xi|^2-c_2\geq 0
$$
for all $\xi\in\real^m$ such that 
$|\xi|\geq\sqrt{c_2/c_1}$, for some positive constants $c_1,\,c_2$. 

\smallskip

Next, we notice that $|{\bu}^m|_{1,p}$ is uniformly bounded with respect to $m$. Indeed, multiplying $(\ref{eq for um})$ by $\xi_j$ and summing in
$j$ from $1$ to $m$, we obtain, as in \eqref{fxixi},
$$
\begin{array}{rl}
& \frac{1}{T}\int_\Omega |D(\bu^m)|^2 
          + \frac{1}{T} \int_\Omega D(\bu^m):D(\ba)\\
+& \int_\Omega|D({\bu}^m)+D({\ba})|^{p-2}(D({\bu}^m)+
D({\ba})):D({\bu}^m)\\
+&\left({\bu}^m\cdot\nabla{\ba},{\bu}^m\right)+
\left({\ba}\cdot\nabla{\ba},{\bu}^m\right)=0,
\end{array}
$$
and then, proceeding with similar estimates to obtain \eqref{eq.10},
\eqref{eq.66} and \eqref{eq.67}, we arrive at\footnote{Here we 
write explicitly $\Omega_T$, instead of $\Omega$, for future reference.} 
\begin{equation}
\label{y(T)}
\frac{1}{2T} \int_{\Omega_T}|D(\bu^m)|^2
+|{\bu}^m|_{1,p}^p\leq c,
\end{equation}
for some constant $c$.
Thus, there exists a subsequence of $\{\bu^m\}$, which we still shall denote by $\{\bu^m\}$, and a vector field ${\bu}\in{\mathcal D}_0^{1,p}(\Omega)$ such that$^\ast$
\begin{equation}
\label{convergence of um}
\begin{array}{l} {\bu}^m\rightharpoonup
{\bu}\;\;\;\mbox{in}\;{\mathcal D}_0^{1,p}(\Omega_T)\\
{\bu}^m\rightarrow {\bu}\;\;\;\mbox{in}\;L^q(\Omega_T)
\end{array}
\end{equation}
when $m\to\infty$, where $q\ge 1$ is any number less than the critical Sobolev exponent
$p^\ast:=\frac{np}{n-p}=\frac{3p}{3-p}$, if $n=3$ and $p<3$, and $1\le q<\infty$ is arbitrary, if
$p\ge n$ ($n=2,3$). In particular, $1\le q<\infty$ is arbitrary for $n=2$,
since $p>2$.

Now we want to pass to the limit in $(\ref{eq for um})$ when 
$m\to\infty$ and obtain it with $\bu$ in place of $\bu^m$ and with any ${\mathbb{\varphi}}\in {\mathcal D}(\Omega)$
in place of ${\mathbb{\varphi}}^j$. 
We begin by defining the operators
$$
A({\bw})=-\mbox{div}\{|D({\bw})+D({\ba})|^{p-2}(D({\bw})+ D({\ba}))\},
$$
and
$$
C(\bw)=- \frac{1}{T}\mbox{div}\{D(\bw)+D(\ba)\},
$$
for ${\bw}\in{\mathcal D}_0^{1,p}(\Omega)$. More precisely, 
$A$ and $C$ are operators from ${\mathcal D}_0^{1,p}(\Omega)$ into ${\mathcal D}_0^{1,p}(\Omega)^\prime$, defined by
$$
\langle A({\bw}),\bphi\rangle=
 \int_{\Omega_T}|D(\bw)+D(\ba)|^{p-2}(D(\bw)+D({\ba})):D(\bphi)
$$
and
$$
\langle C(\bw),\bphi\rangle
= \int_{\Omega_T}\left(D(\bw)+D(\ba)\right):D(\bphi).
$$
Notice that $D(\bw)+D(\ba)\in L^{p'}(\Omega)$ 
because $p>2\Rightarrow p'<p$ and $\Omega=\Omega_T$ is a bounded domain.
We also write
$$
B({\bw})=-\left({\bw}\cdot\nabla{\bw}+{\bw}\cdot\nabla{\ba}+
{\ba}\cdot\nabla{\bw}+{\ba}\cdot\nabla{\ba}\right).
$$
So we want to show that 
$\langle A(\bu)+C(\bu),{\bphi}\rangle=(B(\bu),\bphi)$ for every $\bphi$
in ${\mathcal D}(\Omega)$, or, equivalently, for every $\bphi$
in ${\mathcal D}_0^{1,p}(\Omega)$.

By \eqref{eq for um}, we have 
\begin{equation}
\label{aum+cum=bum}
\langle A(\bu^m)+C(\bu^m),\bphi \rangle = \left( B(\bu^m),\bphi\right)
\end{equation}
for all $\bphi\in {\mathcal D}_0^{1,p}(\Omega)$ and all $m=1,2,\cdots$. 

Since $|{\bu}^m|_{1,p}$ is uniformly bounded, by H\"older inequality we have that $\{A({\bu}^{m})\}$ is a bounded sequence in 
${\mathcal D}_0^{1,p}(\Omega)^{\prime}$, so there is a
${\mathbf\chi}\in{\mathcal D}_0^{1,p}(\Omega)^{\prime}$ and a further
subsequence $\{\bu^m\}$ such that
\begin{equation}
\label{aum}
\langle A({\bu}^{m}),{\bphi}\rangle\,\longrightarrow\,\langle
{\mathbf\chi},{\bphi}\rangle
\end{equation}
for all $\bphi\in{\mathcal D}_0^{1,p}(\Omega)$. 
Next, we show that
\begin{equation}
\label{bum}
\left(B({\bu}^{m}),\bphi\right)
\,\longrightarrow\,\left(B({\bu}),\bphi\right), \ \ 
\forall\ \bphi\in{\mathcal D}_0^{1,p}(\Omega).
\end{equation}
Fisrt we notice that
$$
\begin{array}{rl}
   &\left| \left( \bu^m\cdot\nabla\bu^m,\bphi\right)
          -\left( \bu\cdot\nabla\bu,\bphi\right) \right|\\
=  &\left| \left( (\bu^m-\bu)\cdot\nabla\bu^m,\bphi\right)
         +\left( \bu\cdot\nabla(\bu^m-\bu),\bphi\right) \right|\\
=  &\left| \left( (\bu^m-\bu)\cdot\nabla\bu^m,\bphi\right)
         -\left( \bu\cdot\nabla\bphi,\bu^m-\bu\right) \right|\\ 
\le&  \|\bu^m-\bu \|_q \| \nabla\bu^m \|_p \|\bphi\|_q 
 + \|\bu\|_q\|\nabla\bphi\|_p\|\bu^m-\bu\|_q
\longrightarrow0,
\end{array}
$$
where $q$ is large enough such that $\frac{2}{q}+\frac{1}{p}\le 1$
and less than $p^\ast:=\frac{np}{n-p}$ if $p<n$. Notice that
if $p<n$ then $n=3$ ($n=2,3$ in this paper and
$p>2$) and, since $p>2$,
we have $\frac{2}{p^\ast}+\frac{1}{p}<5/6$. 
Similarly, and more easily, we also have
$$
\left|\left(\bu^m\cdot\nabla\ba,\bphi\right)
- \left({\bu}\cdot\nabla{\ba},\bphi\right)\right|
\longrightarrow0
$$
and
$$
\left|\left(\ba\cdot\nabla\bu^m,\bphi\right)
- \left(\ba\cdot\nabla\bu,\bphi\right)\right|
\longrightarrow0.
$$
Thus we have shown \eqref{bum}. From \eqref{convergence of um} and the
fact that $p>2$ and $\Omega=\Omega_T$ is bounded, we also have 
$\lim\langle C(\bu^m),\bphi\rangle=\langle C(\bu),\bphi\rangle$
for all $\bphi\in{\mathcal D}_0^{1,p}(\Omega)$. Then,
from \eqref{aum+cum=bum}-\eqref{bum}, we have 
$\mathbf{\chi}+C(\bu)=B(\bu)$ in ${\mathcal D}_0^{1,p}(\Omega)^\prime$.
Then, to conclude the proof, it
remains to show that $\mathbf{\chi}=A(\bu)$. To see this, it is enough
now to show that $\langle A(\bu^m),\bu^m\rangle$ converges to 
$\langle {\mathbf\chi},\bu\rangle$, since, by Lemma \ref{inequalities}, the operator $A$ is monotone. Indeed, we have the following classical argument for monotone operators. From
$\langle A(\bu^m)-A(\bw),\bu^m-\bw\rangle\geq0$,
i.e.
$$
\langle A(\bu^m),\bu^m\rangle-\langle
A(\bu^m),\bw\rangle - \langle A(\bw),\bu^m\rangle+\langle A(\bw),\bw\rangle\geq0,
$$
if $\langle A(\bu^m),\bu^m\rangle$ converges to 
$\langle {\mathbf\chi},\bu\rangle$ then, by \eqref{convergence of um} and \eqref{aum}, we 
can take the limit in this inequality when $m\to\infty$ and obtain
$\langle\chi-A{\bw},{\bu}-{\bw}\rangle\geq0$,
for all $\bw\in{\mathcal D}_0^{1,p}(\Omega)$. Now replacing
${\bw}$ by ${\bu}-\lambda\bw$, for $\lambda\in\real^{+}$, we arrive at
$\langle\chi-A({\bu}-\lambda\bw),\bw\rangle\geq0$
for all $\bw\in{\mathcal D}_0^{1,p}(\Omega)$ and all $\lambda\in\real^{+}$.
Then the desired result follows, once one shows that 
$\lim_{\lambda\to0+}\langle A({\bu}-\lambda\bw),\bw \rangle 
= \langle A(\bu),\bw \rangle$. Here, we can show this using the 
Lebesgue's dominated convergence theorem, since the integrand
in $\langle A({\bu}-\lambda\bw),\bw \rangle$ is dominated, for any $\lambda\in(0,1)$, by some constant times the function
$(|D(\bu)|^{p-1}+|D(\bw)|^{p-1}+|D(\ba)|^{p-1})|D(\bw)|$, which belongs
to $L^1(\Omega)$.

To show that 
$\langle A(\bu^m),\bu^m\rangle=\left( B(\bu^m),\bu^m\right)
-\langle C(\bu^m),\bu^m\rangle$
converges to 
$\langle{\mathbf\chi},\bu\rangle$ which is equal to 
$\left( B(\bu),\bu \right)-\langle C(\bu),\bu\rangle$, we write
$$
\begin{array}{rl}
 &\left( B(\bu^m),\bu^m\right) - \left( B(\bu),\bu \right)\\
=&\left[\left( B(\bu^m),\bu^m\right) - \left( B(\bu),\bu^m \right)\right]
   + \left( B(\bu),\bu^m - \bu \right)
\end{array}
$$
and notice that the two last terms converge to zero, when $m\to\infty$,
by the estimates above we used to obtain \eqref{bum}.  
It is easy to see, using again \eqref{convergence of um} and the fact that
$p>2$ and $\Omega=\Omega_T$ is bounded, that we have also 
$\lim\langle C(\bu^m),\bu^m\rangle=\langle C(\bu),\bu\rangle$.
\end{proof}


Next, given any $t>0$, we show that the solution $\bu^T$ of \eqref{truncated}
is uniformly bounded in ${\mathcal D}_0^{1,p}(\Omega_t)$, with respect to
$T$, for $T\ge t+1$. Proceeding similarly to \cite{ls}, we introduce the function 
\begin{equation}
\label{def of y}
y(t)=\frac{1}{T}\left|{\bu^T}\right|_{1,2,\Omega_t}^2+
\left|{\bu^T}\right|_{1,p,\Omega_t}^p, \ \ t>0, \ T\ge t+1. 
\end{equation}

In the sequel we write $\bu^T=\bu$ and often $\bu+\ba=\bv$. 
Multiplyng \eqref{truncated}$_1$ by $\bu$ and integrating by parts, using that $\bu|\partial\Omega=~0$, we have
\begin{equation}
\label{integrating by parts}
\begin{array}{rl}
&\frac{1}{T}\int_{\Omega_t}\left|D({\bu})\right|^2
   +\int_{\Omega_t}\left|D(\bv)\right|^{p-2}D(\bv):D({\bu})\\
=& -\frac{1}{T}\int_{\Omega_t}D({\ba}):D({\bu})
   +\int_{\Omega_t}\big(
\bu\cdot\nabla{\bu}\cdot{\ba}-{\ba}\cdot\nabla{\bu}\cdot{\bu}-
{\ba}\cdot\nabla{\ba}\cdot{\bu}\big)\\
& +\int_{\Gamma_t}\big( \bu\cdot\left(D(\bv)\bn\right)
    +\bu\cdot(|D(\bv)|^{p-2}D(\bv)\bn) \big)\\
&-\int_{\Gamma_t}\big(   
     \frac{1}{2}|\bu|^2(\bu\cdot\bn)+(\bu\cdot\ba)(\bu\cdot\bn)
      +(\bu\cdot\bn){\mathcal P}\big),
\end{array}
\end{equation}
where $\Gamma_t=\Sigma(t)\cup\Sigma(-t)$. 
First we estimate the {\lq}interior{\rq} integrals $\int_{\Omega_t}\cdots$. Using Young inequality and Lemma \ref{a}, we get
\begin{equation}
\label{dadu}
\begin{array}{c}
-\int_{\Omega_t}D(\vec{\mbox{a}}):D({\bu})\leq
\varepsilon\int_{\Omega_t}\left|D({\bu})\right|^2+C_\varepsilon,
\end{array}
\end{equation}
and
\begin{equation}
\label{ua}
\begin{array}{rl}
&\left|\int_{\Omega_t}\big(
\bu\cdot\nabla{\bu}\cdot{\ba}-{\ba}\cdot\nabla{\bu}\cdot{\bu}-
{\ba}\cdot\nabla{\ba}\cdot{\bu}\big)\right|\\
\le & |\bu|_{1,p,\Omega_t}
\big(\int_{\Omega_t}|\ba|^{p\prime}|\bu|^{p'}\big)^{1/p'}
+ \left|\int_{\Omega_t}\big({\ba}\cdot\nabla{\bu}\cdot{\bu}-
{\ba}\cdot\nabla{\ba}\cdot{\bu}\big)\right|\\
\le & \varepsilon |\bu|_{1,p,\Omega_t}^p+C_\varepsilon t,
\end{array}
\end{equation}
%
where $\varepsilon>0$ is fixed below.
Besides, proceeding as in \eqref{eq.66}, we get
\begin{equation}
\label{dvdvdu}
\begin{array}{l}
 \int_{\Omega_t}\left|D(\bv)\right|^{p-2}D(\bv):D({\bu})\\
\geq c_p\left|{\bu}\right|_{1,p,\Omega_t}^p+ \int_{\Omega_t}\left|D({\ba})
\right|^{p-2}D({\ba}):D({\bu})
\end{array}
\end{equation}
and
\begin{equation}
\label{dadadu}
\begin{array}{c}
\left|\int_{\Omega_t}\left|D({\ba})\right|^{p-2}D({\ba}):D({\bu})\right|
\leq\varepsilon\left|{\bu}\right|_{1,p,\Omega_t}^p
+C_\varepsilon t.
\end{array}
\end{equation}
Then, from \eqref{integrating by parts}-\eqref{dvdvdu} and taking $\varepsilon\ll 1$, we obtain
\begin{equation}
\label{first estimate for y}
y(t)\leq c_1t+I, 
\end{equation}
where
\begin{equation}
\label{def of I}
\begin{array}{c}
I=\int_{\Gamma_t}\big[\frac{1}{T}\bu\cdot D(\bv)\bn
   +\bu\cdot |D(\bv)|^{p-2}D(\bv)\bn
   -\frac{1}{2}|\bu|^2(\bu\cdot\bn)\\
   -(\bu\cdot\ba)(\bu\cdot\bn)-(\bu\cdot\bn){\mathcal P}\big].
\end{array}
\end{equation}

Now the idea is to control the boundary integral $I$ by the interior
integral $y(t)$, but if for instance one tries to apply the trace theorem then
higher order derivatives arise.  To achieve that purpose
we use the clever idea given in \cite{ls} for the case $p=2$, that is, to
integrate $I\equiv I(t)$ from $\eta-1$ to $\eta$, for $\eta>1$, or better,
integrate the estimate \eqref{first estimate for y}.
Thus we introduce the function
\begin{equation}
\label{def of z}
z(\eta)=\int_{\eta-1}^\eta y(t)dt.
\end{equation}
Notice that since $y$ is a nondecreasing function we have 
$y(\eta-1)\le z(\eta)\le y(\eta)$ for all $\eta>1$, thus estimating $y$ is the same as estimating $z$. Another interesting feature of the function
$z$ is that
\begin{equation}
\label{derivative of z}
z'(\eta)=y(\eta)-y(\eta-1)=\frac{1}{T}|\bu|_{1,2,\Omega_{\eta-1,\eta}}^2
+ |\bu|_{1,p,\Omega_{\eta-1,\eta}}^p.
\end{equation}
Then if we estimate $\int_{\eta-1}^\eta I(t)dt$ in terms of
$|\bu|_{1,p,\Omega_{\eta-1,\eta}}^p$ and
$\frac{1}{T}|\bu|_{1,2,\Omega_{\eta-1,\eta}}^2$, in the end, in virtue of
\eqref{first estimate for y}, we shall obtain a estimate for $z(\eta)$ in terms of $z'(\eta)$. Then we shall use Lemma \ref{ls lemma} to get the desired estimate for $z(\eta)$. Let's do the details.

By \eqref{first estimate for y} and \eqref{def of z}, and the fact that
$\int_{\eta-1}^\eta\int_{\Gamma_t}\cdot
=\int_{\Omega_{\eta-1,\eta}}\cdot$, we have
\begin{equation}
\label{first estimate for z}
z(\eta)\equiv\int_{\eta-1}^\eta y(t)\,dt\leq
c_1\eta+\frac{1}{T}I_1+I_2+I_3+I_4+I_5,
\end{equation}
where
$$
\begin{array}{rll}
I_1&=& \int_{\Omega_{\eta-1,\eta}}\bu\cdot D(\bv)\bn\\
I_2&=& \int_{\Omega_{\eta-1,\eta}}\bu\cdot |D(\bv)|^{p-2}D(\bv)\bn\\
I_3&=&-\int_{\Omega_{\eta-1,\eta}}\frac{1}{2}|\bu|^2(\bu\cdot\bn)\\
I_4&=&-\int_{\Omega_{\eta-1,\eta}}(\bu\cdot\ba)(\bu\cdot\bn)\\
I_5&=&-\int_{\Omega_{\eta-1,\eta}}{\mathcal P}(\bu\cdot\bn).
\end{array}
$$
Using H\"older inequality, Lemma \ref{a}\,ii),
Poincar\'e inequality (Lemma \ref{inequalities}\,iii)) and Young inequality, we have 
$$
\begin{array}{rll}
|I_2|&\leq&\int_{\Omega_{\eta-1,\eta}} |D(\bv)|^{p-1}|\bu|\\
     &\leq& c\left(\int_{\Omega_{\eta-1,\eta}}|D(\bu)|^{p-1}|\bu|
             +\int_{\Omega_{\eta-1,\eta}}|D(\ba)|^{p-1}|\bu|\right)\\
           &\leq& c \left(|\bu|_{1,p,\Omega_{\eta-1,\eta}}^{p-1}
\|\bu\|_{p,\Omega_{\eta-1,\eta}}+
|\ba|_{1,p,\Omega_{\eta-1,\eta}}^{p-1}
\|\bu\|_{p,\Omega_{\eta-1,\eta}}\right)\\
&\leq& c\left(|\bu|_{1,p,\Omega_{\eta-1,\eta}}^p+|\bu|_{1,p,\Omega_{\eta-1,\eta}}\right),
\end{array}
$$
so
\begin{equation}
\label{I2}
|I_2|\leq
c\left(z^\prime(\eta)+z^\prime(\eta)^{1/p}\right).
\end{equation}
Analogously,
\begin{equation}
\label{I1}
|I_1|\leq
c\left(z^\prime(\eta)+z^\prime(\eta)^{1/2}\right).
\end{equation}
Regarding $I_3$ and $I_4$, using Sobolev embedding, we get
\begin{equation}
\label{I3 I4}
\begin{array}{rl} |I_3|+|I_4| \ \leq&
\int_{\Omega_{\eta-1,\eta}}\frac{1}{2}|\bu|^3
    + \int_{\Omega_{\eta-1,\eta}} c|\bu|^2\\
 \ \leq & 
c\big(|\bu|_{1,p,\Omega_{\eta-1,\eta}}^3+|\bu|_{1,p,\Omega_{\eta-1,\eta}}^2
\big)\\
\ = & c\left(z^\prime(\eta)^{3/p}+z^\prime(\eta)^{2/p}\right).
\end{array}
\end{equation}
To estimate $I_5$, we use Lemma \ref{div eq}. Let $\bw$ be a vector
field in $W_0^{1,p}(\Omega_{\eta-1,\eta})$ such that
$\nabla\cdot\bw=\bu\cdot\bn$ and 
$|\bw|_{1,p,\Omega_{\eta-1,\eta}}\le c|\bu|_{p,\Omega_{\eta-1,\eta}}$,
where $c$ is some constant, independent of $\bw$ and $\bu$.
Then, using the equation \eqref{truncated}$_1$, we can write 
$$
\begin{array}{rl}
I_5\ =& -\int_{\Omega_{\eta-1,\eta}}{\mathcal P}(\bu\cdot\bn)=\int_{\Omega_{\eta-1,\eta}}\nabla{\mathcal P}\bw\\ 
\ =& \int_{\Omega_{\eta-1,\eta}}|D(\bv)|^{p-2}D(\bv):D(\bw)
      + \int_{\Omega_{\eta-1,\eta}}(\bu\cdot\nabla\bu)\cdot\bw\\
   &  + \int_{\Omega_{\eta-1,\eta}}(\bu\cdot\nabla\ba)\cdot\bw
      + \int_{\Omega_{\eta-1,\eta}}(\ba\cdot\nabla\bu)\cdot\bw
      + \int_{\Omega_{\eta-1,\eta}}(\ba\cdot\nabla\ba)\cdot\bw.
\end{array}
$$
Thus, proceeding with similar estimates to those used to obtain
\eqref{I1}-\eqref{I3 I4}, we arrive at  
\begin{equation}
\label{I5}
|I_5|\leq
c\left(z^\prime(\eta)+z^\prime(\eta)^{1/p}+z^\prime(\eta)^{2/p}
+z^\prime(\eta)^{3/p}\right).
\end{equation}
From \eqref{first estimate for z}-\eqref{I5}, we have
\begin{equation}
\label{second estimate for z}
z(\eta)\leq c_1\eta+\Psi\left(z'(\eta)\right),
\end{equation}
for all $\eta\ge1$, with 
$\Psi(\tau)=c_2\left(\tau+\tau^{1/2}+\tau^{1/p}+\tau^{2/p}+\tau^{3/p}\right)$. 
Now, from \eqref{y(T)} and the weak convergence 
\eqref{convergence of um}$_1$, we have $y(T)\le c$ for
some constant $c$ (independent of $T$), so by $z(T)\le y(T)$ and by assuming that $c_1\ge c$,
without loss of generality, we have 
\begin{equation}
\label{choice of c1}
z(T)\le c_1 T. 
\end{equation}
Next, let $c_3>0$ be a constant such that
\begin{equation}
\label{choice of c3}
2c_1+c_3\ge 2\Psi(2c_1). 
\end{equation}
Then, by \eqref{second estimate for z}-\eqref{choice of c3}, we have
the conditions of Lemma \ref{ls lemma}\,i) satisfied, with
$\varphi(\eta)=2c_1\eta+c_3$, $\delta=1/2$, $t_0=1$ and $T>1$ (arbitrary). Therefore,
$z(\eta)\le 2c_1\eta+c_3$ for all $\eta\ge1$, and hence, since 
$y(\eta-1)\le z(\eta)$, we conclude that
there are (new) constants $c_1,c_2$ such that
\begin{equation}
\label{estimate for y}
y(t):=\frac{1}{T}|\bu^T|_{1,2,\Omega_t}^2 + |{\bu^T}|_{1,p,\Omega_t}^p
\le c_1 t +c_2, 
\end{equation}
for all $t>0$ and $T\ge t+1$.

Having the estimate \eqref{estimate for y}, we complete now the proof
of our main result.

\begin{proof}[\bf Proof of Theorem \ref{main theorem}.] 
Let $\bu^k$ be the solution of \eqref{truncated} in $\Omega_k$, $k=3,4,\cdots$, whose existence is assured by Proposition \ref{prop.1}, and set 
$\bu^k=0$ in $\Omega/\Omega_k$. By \eqref{estimate for y}, 
for each $j=2,3,\cdots$, the sequence $\{\bu^k\}_{k\ge j+1}$ is weakly compact in $W^{1,p}(\Omega_j)$, thus, by a diagonalization process
we obtain a subsequence, which we also denote by $\{\bu^k\}$, and an
$\bu$ in $W_{\mbox{\tiny loc}}^{1,p}({\overline\Omega})$ such that
\begin{equation}
\label{convergence of uk}
\begin{array}{l}
\bu^k\;\rightharpoonup\;\bu\;\;\;\;\;\mbox{in}\;\;W^{1,p}(\Omega_t)\\
\bu^k\;\rightarrow\;\bu\;\;\;\;\;\mbox{in}\;\;L^q(\Omega_t),
\end{array}
\end{equation}
for any $t>0$, where $q\ge 1$ is arbitrary, if $p\ge n$, and less than $p^\ast:=\frac{3p}{3-p}$, if $n=3$ and $p<3$. 
(Cf. \eqref{convergence of um}). 
Besides, by \eqref{convergence of uk}$_1$, the estimate 
\eqref{estimate for y} and the fact that 
$\bu^k\in{\mathcal D}_0^{1,p}(\Omega)$, we have that the limit $\bu$ satisfies 
\eqref{ls for u}$_2$-\eqref{ls for u}$_5$. Then, to conclude the proof
of Theorem \ref{main theorem}, it remains to prove that $\bu$ satisfies
the equation \eqref{ls for u}$_1$, in the weak sense \eqref{weak sense}.
Again, we shall use the Browder-Minty method, due to the shear dependent
viscosity. The idea here is to mimic the proof of Proposition \ref{prop.1},
paying attention that now $\Omega$ is not a bounded domain and
$D(\bu)$ is only locally integrable in $\overline{\Omega}$. This lead
us to localize the arguments and operators used in that proof, as follows.

Given $\bphi\in{\mathcal D}(\Omega)$, letting $k_0\in\N$ such that
$\mbox{supp}\,\bphi\subset\Omega_{k_0-1}$, we have 
\begin{equation}
\label{eq for uk}
\int_{\Omega_{k_0}}{\mathbb S}_k(\bu^k):D(\bphi)
=\int_{\Omega_{k_0}}B(\bu^k)\cdot\bphi,
\end{equation}
for all $k\ge k_0$, where
$${\mathbb S}_k(\bw)=\left(\frac{1}{k}+\left|D(\bw)+D(\ba)\right|^{p-2}\right)\left(D(\bw)+ D(\ba)\right)$$ and
$$B(\bw)=-\left(\bw\cdot\nabla\bw+\bw\cdot\nabla\ba
+\ba\cdot\nabla\bw+\ba\cdot\nabla\ba\right).$$ 
Then, we want to pass to the limit in \eqref{eq for uk} when $k\to\infty$
and obtain \eqref{weak sense}.
Let $\zeta:\Omega\longrightarrow\real_+$ be a smooth function such that $\zeta=1$ in $\mbox{supp}\,\bphi$ and
$\zeta=0$ in $\Omega\setminus\Omega_{k_0}$ and $A_\zeta,\;A_{\zeta,k}$ be the operators defined by
$$
\begin{array}{c}
  \langle A_{\zeta,k}\bw_1,\bw_2\rangle
= \int_{\Omega_{k_0}}{\mathbb S}_k(\bw_1):D(\bw_2)\zeta\\
  \langle A_\zeta\bw_1,\bw_2\rangle
= \int_{\Omega_{k_0}}{\mathbb S}(\bw_1):D(\bw_2)\zeta,
\end{array}
$$
on the space 
$$V_0\equiv W^{1,p}(\Omega_{k_0},\partial\Omega\cap\partial\Omega_{k_0})
:=\left\{\bw\in W^{1,p}(\Omega_{k_0})\,;\bw=0\;\mbox{in}\;\partial\Omega\cap\partial\Omega_{k_0}\right\},$$ 
where 
$${\mathbb S}(\bw)=|D(\bw)+D(\ba)|^{p-2}(D(\bw)+ D(\ba)).$$
Thus, \eqref{eq for uk} becomes
\begin{equation}
\label{auk=buk}
\langle A_{\zeta,k}(\bu^k),\bphi \rangle = \left( B(\bu^k),\bphi\right)
\end{equation}
and \eqref{weak sense} becomes
\begin{equation}
\langle A_{\zeta}(\bu),\bphi \rangle = \left( B(\bu),\bphi\right).
\end{equation}
We notice, as $\zeta$ is a nonnegative function, that $A_{\zeta,k}$ is still
a monotone operator. Besides, $\{A_{\zeta,k}(\bu^k)\}$ is a bounded
sequence in $V_0^\ast$, then, up to a subsequence, we have
$A_{\zeta,k}(\bu^k)\;\sta{\ast}{\rightharpoonup}\;\chi_\zeta$
for some $\chi_\zeta$ in $V_0^\ast$. As in \eqref{bum}, we also have 
\begin{equation}
\label{buk}
\left(B(\bu^k),\bphi\right)\,\longrightarrow\,\left(B({\bu}),\bphi\right). \end{equation}
Then, by \eqref{auk=buk}, we obtain 
$\langle \chi_\zeta,\bphi \rangle = \left( B(\bu),\bphi\right)$, so it
remains to show that $\chi_\zeta=A_{\zeta}(\bu)$. To obtain this, from
the monotonicity of $A_{\zeta,k}$, it is enough to prove that
$\langle A_{\zeta,k}(\bu^k),\bu^k \rangle$ converges to 
$\langle \chi_\zeta,\bu\rangle$. Indeed,
\begin{equation}
\langle A_{\zeta,k}\bu^k,\bu^k\rangle -
\langle A_{\zeta,k}\bu^k,\bw\rangle-
\langle A_{\zeta,k}\bw,\bu^k\rangle+
\langle A_{\zeta,k}\bw,\bw\rangle\geq0,
\end{equation}
for all $\bw\in V_0$ and, by \eqref{convergence of uk}, 
$\langle A_{\zeta,k}\bw,\bu^k\rangle$ and 
$\langle A_{\zeta,k}\bw,\bw\rangle$ tend, respectively, to
$\langle A_{\zeta}\bw,\bu\rangle$ and 
$\langle A_{\zeta}\bw,\bw\rangle$, when $k\to\infty$. Then, once
we have $\lim_{k\to\infty}\langle A_{\zeta,k}(\bu^k),\bu^k \rangle
=\langle \chi_\zeta,\bu\rangle$, we shall have 
$\langle\chi_\zeta-A_\zeta({\bu}-\lambda\bw),\bw\rangle\geq0$ for all
$\bw\in V_0$ and all $\lambda\ge0$, and by Lebesgue's dominated
convergence theorem, 
$\lim_{\lambda\to0+}\langle A_\zeta({\bu}-\lambda\bw),\bw\rangle=
\langle A_\zeta({\bu}),\bw\rangle$, hence $\chi_\zeta=A_\zeta(\bu)$.
Let us show then that 
$\lim_{k\to\infty}\langle A_{\zeta,k}(\bu^k),\bu^k \rangle
=\langle \chi_\zeta,\bu\rangle$. We compute $\langle \chi_\zeta,\bu\rangle$
and
$\lim_{k\to\infty}\langle A_{\zeta,k}(\bu^k),\bu^k \rangle$ using
directly the equation \eqref{truncated}$_1$, with $T=k$. Multiplying
this equation by $\zeta\bu$ and integrating by parts in $\Omega_{k_0}$,
we arrive at 
\begin{equation}
\label{auku}
\begin{array}{rl}
&\langle A_{\zeta,k}\bu^k,\bu\rangle\\
=&\int_{\Omega_{k_0}}B(\bu^k)\cdot\zeta\bu-
\int_{\Omega_{k_0}}\mathcal{P}^k\bu\cdot\nabla\zeta-\frac{1}{2}\int_{\Omega_{k_0}}
\bu\cdot {\mathbb S}(\bu^k)\cdot\nabla\zeta\\
&-\frac{1}{k}\left(\int_{\Omega_{k_0}}\bu\cdot
D(\bu^k)\cdot\nabla\zeta+\int_{\Omega_{k_0}}\bu\cdot
D(\ba)\cdot\nabla\zeta\right),
\end{array}
\end{equation}
where $\mathcal{P}^k$ is the pressure function associated with $\bu^k$.
From \eqref{estimate for y}, we have
\begin{equation}
\label{last term in auku}
\frac{1}{k}\left(\int_{\Omega_{k_0}}\bu\cdot
D(\bu^k)\cdot\nabla\zeta+\int_{\Omega_{k_0}}\bu\cdot
D(\ba)\cdot\nabla\zeta\right)\leq
\frac{c_{k_0}}{k}\;\longrightarrow\;0.
\end{equation}
and that $\{\mathbb{S}(\bu^k)\}$ is uniformly bounded in $L^{p'}(\Omega_{k_0})$, so there is a 
$\chi_{p^\prime}\in L^{p^\prime}(\Omega_{k_0})$ such that
\begin{equation}
\label{fourth term in auku}
\lim_{k\to\infty}\int_{\Omega_{k_0}}\bu\cdot
{\mathbb S}(\bu^k)\cdot\nabla\zeta=\int_{\Omega_{k_0}}
\bu\cdot \chi_{p^\prime}\cdot\nabla\zeta\,.
\end{equation}
Similarly to the proof of \eqref{bum}, we also have
\begin{equation}
\label{buk}
\lim_{k\to\infty}\int_{\Omega_{k_0}}B(\bu^k)\cdot\zeta\bu
=\int_{\Omega_{k_0}}B(\bu)\cdot\zeta\bu.
\end{equation}

Next, we show that
\begin{equation}
\label{third term in auku}
\int_{\Omega_{k_0}}\mathcal{P}^k\bu\cdot\nabla\zeta
\longrightarrow\int_{\Omega_{k_0}}\mathcal{P}\,\bu\cdot\nabla\zeta,
\end{equation}
for some further subsequence of $k\to\infty$,
where, up to a constant, $\mathcal{P}$ is the pressure function associated
with $\bu$. For this, it is enough to show that there is a  $\mathcal{P}\,\in L^{p^\prime}(\Omega_{k_0})$ ($p'=p/(p-1)$) such that
$\mathcal{P}^k\rightharpoonup\mathcal{P}\;\;\mbox{in}\;\;L^{p^\prime}(\Omega_{k_0})$, i.e. $\{{\mathcal P}^k\}$ is uniformly bounded in
$L^{p^\prime}(\Omega_{k_0})$. Let us assume, without loss of generality, 
$\int_{\Omega_{k_0}}\mathcal{P}^kdx=0$. 
Writing
$$g=\left|\mathcal{P}^k\right|^{p^\prime-2}\mathcal{P}^k-|\Omega_{k_0}|^{-1}\int_{\Omega_{k_0}}
\left|\mathcal{P}^k\right|^{p^\prime-2}\mathcal{P}^kdx,
$$
by Lemma \ref{div eq} there exist a constant $c$ (independent of $k$)
and a vector field $\bpsi\in W_0^{1,p}(\Omega_{k_0})$ such that
\begin{equation}
\label{eq.311}
\left\{\begin{array}{l}
\nabla\cdot\bpsi=g\\
\left\|\bpsi\right\|_{1,p,\Omega_{k_0}}\leq
c\left\|\mathcal{P}^k\right\|_{p^\prime,\Omega_{k_0}}^{\frac{1}{p-1}}.
\end{array}\right.
\end{equation}
Notice that $\int_{\Omega_{k_0}}gdx=0$, $g\in L^p(\Omega_{k_0})$ and
$\| g \|_{p,\Omega_{k_0}}\leq 
2\| \mathcal{P}^k \|_{p^\prime,\Omega_{k_0}}^{\frac{1}{p-1}}$.
Then,
\begin{equation}
\begin{array}{rl}
\int_{\Omega_{k_0}}\left|\mathcal{P}^k\right|^{p^\prime}=&\int_{\Omega_{k_0}}\left(\left|\mathcal{P}^k\right|^{p^\prime-2}\mathcal{P}^k\right)\mathcal{P}^k\\
=&\int_{\Omega_{k_0}}g\mathcal{P}^kdx+|\Omega_{k_0}|^{-1}\left(\int_{\Omega_{k_0}}
\left|\mathcal{P}^k\right|^{p^\prime-2}\mathcal{P}^k\right)\int_{\Omega_{k_0}}
\mathcal{P}^kdx\\
=&\int_{\Omega_{k_0}}
\mathcal{P}^k\nabla\cdot\bpsi=\int_{\Omega_{k_0}}S_k(\bu^k):D(\bpsi)+
\int_{\Omega_{k_0}}B(\bu^k)\cdot\bpsi,
\end{array}
\end{equation}
where, for the last iguality, we used equation \eqref{truncated}$_1$.
Using again \eqref{eq.311} and previous estimates, it follows that
\begin{equation}
\begin{array}{rl}
&\int_{\Omega_{k_0}}S_k(\bu^k):D(\bpsi)+
\int_{\Omega_{k_0}}B(\bu^k)\cdot\bpsi\\
\leq &
c(\left\|\bu^k\right\|_{1,p,\Omega_{k_0}}+\left\|\ba\right\|_{1,p,
\Omega_{k_0}})\left\|\mathcal{P}^k\right\|_{p^\prime,\Omega_{k_0}}^{\frac{1}{p-1}}.
\end{array}
\end{equation}
Therefore,
$$\left\|\mathcal{P}^k\right\|_{p^\prime,\Omega_{k_0}}\leq c\left(\left\|\bu^k
\right\|_{1,p,\Omega_{k_0}}+\left\|\ba\right\|_{1,p,\Omega_{k_0}}\right)\leq
C,
$$
as we wished.

From \eqref{auku}-\eqref{third term in auku}, we obtain
\begin{equation}
\label{chi zeta u}
\begin{array}{rl}
 &\langle\chi_\zeta,\bu\rangle\\
=&\int_{\Omega_{k_0}}B(\bu)\cdot(\zeta\bu)-
\int_{\Omega_{k_0}}\mathcal{P}\,\bu\cdot\nabla\zeta-\frac{1}{2}\int_{\Omega_{k_0}}
\bu\cdot \chi_{p^\prime}\cdot\nabla\zeta\,.
\end{array}
\end{equation}

Now, replacing $\bu$ by $\bu^k$ in \eqref{auku}, we have
\begin{equation}
\label{aukuk}
\begin{array}{rl}
&\langle A_{\zeta,k}\bu^k,\bu^k\rangle\\
=&\int_{\Omega_{k_0}}B(\bu^k)\cdot (\zeta\bu^k)-\int_{\Omega_{k_0}}\mathcal{P}^k\bu^k\cdot\nabla\zeta-\frac{1}{2}
\int_{\Omega_{k_0}}\bu^k\cdot S(\bu^k)\cdot\nabla\zeta\\
&-\frac{1}{k}\left(\int_{\Omega_{k_0}}\bu\cdot
D(\bu^k)\cdot\nabla\zeta+ \int_{\Omega_{k_0}}\bu\cdot
D(\ba)\cdot\nabla\zeta\right),
\end{array}
\end{equation}
and taking the limit when $k\to\infty$ in the right hand side here, 
analogously to the steps we did to obtain \eqref{chi zeta u}, we
get the right hand side of \eqref{chi zeta u}, i.e.
$$
\begin{array}{rl}
&\lim_{k\to\infty}\left\{\int_{\Omega_{k_0}}B(\bu^k)\cdot (\zeta\bu^k)-\int_{\Omega_{k_0}}\mathcal{P}^k\bu^k\cdot\nabla\zeta-\frac{1}{2}
\int_{\Omega_{k_0}}\bu^k\cdot S(\bu^k)\cdot\nabla\zeta\right.\\
&\left.-\frac{1}{k}\left(\int_{\Omega_{k_0}}\bu\cdot
D(\bu^k)\cdot\nabla\zeta+ \int_{\Omega_{k_0}}\bu\cdot
D(\ba)\cdot\nabla\zeta\right)\right\}\\
=&\int_{\Omega_{k_0}}B(\bu)\cdot(\zeta\bu)-
\int_{\Omega_{k_0}}\mathcal{P}\,\bu\cdot\nabla\zeta-\frac{1}{2}\int_{\Omega_{k_0}}
\bu\cdot \chi_{p^\prime}\cdot\nabla\zeta\, .
\end{array}
$$
Then, combining \eqref{chi zeta u} and \eqref{aukuk}, we have
$\lim_{k\to\infty}\langle A_{\zeta,k}\bu^k,\bu^k\rangle=\langle\chi_\zeta,\bu\rangle$, and thus
conclude the proof of Theorem \ref{main theorem}. 
\end{proof} 

\smallskip

Next, we make some remarks and prove two additional results, one
on the rate of dissipation of energy of the solution obtained for problem 
\eqref{lady-solo pb} and another on the uniqueness of
solution.   

\begin{remark} Dropping the convective term $\bv\cdot\nabla\bv$ in \eqref{lady-solo pb}$_1$, we obtain the {\em Ladyzhenskaya-Solonnikov problem for Stokes' system with a power law}. The solution of this problem can be obtained as in the proof of Theorem \ref{main theorem}, with obviously much less computations.
\end{remark}

The solution of problem \eqref{lady-solo pb} has energy dissipation
uniformly distributed along the outlets. More precisely, we have the
following result, which generalizes Theorem 3.2 in \cite{ls} for
power law shear thickening fluids.

\begin{prop}
\label{prop.2} 
Let $\bv$ be a solution of problem \eqref{lady-solo pb}, with $p\ge2$,
obtained by the proof of Theorem \ref{main theorem}. Then there exists a
constant $\kappa$ such that
\begin{equation}
\label{eq.82}
\int_{\Omega_i,t-1,t}|\nabla\bv|^p\leq
\kappa\,,\;\;\forall\,t \geq 1,
\end{equation}
where $i=1,2$.
\end{prop}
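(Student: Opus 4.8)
The plan is to reduce the statement to a uniform bound on the shell energy of $\bu=\bv-\ba$ and then to derive a localized energy estimate that, via the Ladyzhenskaya--Solonnikov averaging device already used for Theorem \ref{main theorem}, closes against the linear growth bound \eqref{estimate for y}. First I would split $\int_{\Omega_{i,t-1,t}}|\nabla\bv|^p\le 2^{p-1}\big(\int_{\Omega_{i,t-1,t}}|\nabla\bu|^p+\int_{\Omega_{i,t-1,t}}|\nabla\ba|^p\big)$ and dispose of the second term at once by Lemma \ref{a}\,ii), which bounds it by $c|\alpha|^p$. Thus it suffices to bound $a_i(t):=\int_{\Omega_{i,t-1,t}}|\nabla\bu|^p$ uniformly in $t$; and from \eqref{estimate for y} together with weak lower semicontinuity along the sequence \eqref{convergence of uk}, the limit $\bu$ inherits $|\bu|_{1,p,\Omega_t}^p\le c_1t+c_2$, so that $\sum_i a_i$ has bounded averages over long $t$-intervals but is not yet controlled pointwise in $t$.

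Next I would set up a localized energy identity on the shell. Fixing $i$ and $t\ge 2$, choose a smooth cutoff $\theta=\theta(x_1)$ equal to $1$ on $\Omega_{i,t-1,t}$, supported in $\Omega_{i,t-2,t+1}$, with $|\theta'|\le c$. Since $\theta\bu$ is not divergence free, I would use Lemma \ref{div eq} to produce $\bw\in W_0^{1,p}$ supported in the two transition shells, with $\nabla\cdot\bw=\bu\cdot\nabla\theta$ and $|\bw|_{1,p}\le c\|\bu\|_{p}$ there (the compatibility $\int\bu\cdot\nabla\theta=0$ on each shell holds because $\bu$ has zero flux through every cross section), so that $\bphi=\theta\bu-\bw$ is an admissible divergence free, compactly supported test function in \eqref{weak sense}. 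Inserting $\bphi$, the monotonicity inequality Lemma \ref{inequalities}\,i) isolates the good term $c_p\int_{\Omega_{i,t-1,t}}|D(\bu)|^p$, while every remaining contribution is supported in the transition shells: the viscous remainder carries $\nabla\theta$ and is estimated through $|D(\bv)|^{p-1}|\bu|$ there; the pressure term is treated exactly as $I_5$ in the proof above, through the same $\bw$ and Lemma \ref{div eq}; and the convective term is handled by Lemma \ref{a}\,i), H\"older and Sobolev embedding, which is where the flux $\alpha$ enters.

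The outcome is a recursive inequality for the shell energies, of the schematic form $c_p\,a_i(t)\le C(\alpha)+c\big(a_i(t-1)+a_i(t+1)\big)+(\text{absorbable convective and pressure terms})$. To turn this into a usable bound I would integrate it over a unit interval in $t$, exactly the device that produced \eqref{second estimate for z} from \eqref{first estimate for y}: this trades the cross-section flux integrals through $\Sigma(t-1),\Sigma(t)$ for shell volume integrals, giving a differential inequality of the type $z(\eta)\le c\,\eta+\Psi(z'(\eta))$ for the averaged quantity, to which Lemma \ref{ls lemma}\,i) applies with the already established linear function $c_1t+c_2$ in the role of the comparison $\varphi$. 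This yields $a_i(t)\le\kappa$ for all $t\ge1$, with $\kappa$ depending on $\alpha$ and tending to $0$ as $\alpha\to0$, which is the content of \eqref{eq.82} and of Remark \ref{kappa alpha}.

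The hard part will be the closing step, i.e.\ controlling the boundary flux through the cross sections $\Sigma(t-1)$ and $\Sigma(t)$ without losing derivatives. A naive trace estimate raises the differentiability order, and the transition terms appear at the \emph{same} order as the quantity $a_i(t)$ being estimated, so the local estimate does not close by itself; the apparent circularity is broken precisely by the averaging trick, which replaces the intractable cross-section traces by shell integrals that are then dominated using the linear growth \eqref{estimate for y}. Keeping the convective contribution genuinely absorbable, rather than merely bounded, is what forces the use of the flux-dependent estimate Lemma \ref{a}\,i) and accounts for the dependence of $\kappa$ on $\alpha$.
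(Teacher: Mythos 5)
Your reduction to bounding the shell energy of $\bu=\bv-\ba$, and your disposal of the $\ba$-part via Lemma \ref{a}\,ii), match the paper, and the localized testing with $\theta\bu$ corrected by a Bogovskii field from Lemma \ref{div eq} is a legitimate way to generate a local energy inequality. The gap is in the closing step, and it is fatal as written. Your scheme produces $c_p\,a_i(t)\le C(\alpha)+c\,\bigl(a_i(t-1)+a_i(t+1)\bigr)+\dots$ with a constant $c$ of order one (it comes from H\"older and Poincar\'e on the transition shells, not from any small parameter), and neither of the two exits you propose works: (a) a recursion of this form with $c\ge 1/2$ is satisfied by sequences having isolated spikes of height comparable to $t$ that are perfectly compatible with the cumulative linear bound \eqref{estimate for y}, so it does not self-improve to a uniform bound; (b) unit-averaging in $t$ and invoking Lemma \ref{ls lemma}\,i) against the comparison function $c_1t+c_2$ returns only $z(\eta)\le 2c_1\eta+c_3$, i.e.\ it re-derives the linear growth you already have, and evaluating this at $\eta=t$ gives a bound growing like $t$, not a $t$-independent $\kappa$. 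Nothing in your argument converts \emph{linear growth of the cumulative Dirichlet integral} into a \emph{uniform bound on each unit shell}, which is the entire content of the proposition.

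The paper supplies exactly the missing idea: recentering. For each $\tau$ it introduces the \emph{monotone} family $y_\tau(t)=\frac1k|\bu^k|^2_{1,2,\Omega_{i,\tau-t,\tau+t}}+|\bu^k|^p_{1,p,\Omega_{i,\tau-t,\tau+t}}$, expanding symmetrically outward from the cross section $x_1=\tau$, together with its unit average $z_\tau$. Running the same energy-plus-averaging machinery as in the proof of \eqref{estimate for y} (on the approximants $\bu^k$, whose regularity from Proposition \ref{prop.1} justifies the boundary terms) gives $z_\tau(\eta)\le c_1\eta+\Psi(z_\tau'(\eta))$ on $[1,\tau]$, and the terminal condition $z_\tau(\tau)\le\varphi(\tau)$ for the linear $\varphi(\eta)=c_2\eta+c_3$ is supplied by \eqref{estimate for y}, since $\Omega_{i,0,2\tau}\subset\Omega_{2\tau}$. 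Lemma \ref{ls lemma}\,i) then propagates the linear bound backward to the left endpoint, and the punchline is the evaluation at $\eta=3/2$: since $y_\tau$ is nondecreasing, $y_\tau(1/2)\le z_\tau(3/2)\le\varphi(3/2)$, a constant independent of $\tau$ and $k$, which after weak lower semicontinuity yields \eqref{eq.82}. With your fixed-width, non-monotone $a_i(t)$ the comparison lemma is not applicable and the estimate does not close; you need to replace it by the expanding recentered family before any of the rest of your plan can go through.
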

\begin{proof} Let $\bu=\bv-\ba$. By the proof of Theorem 
\ref{main theorem}, $\bu$ is the weak limit in 
$W_{\mbox{\tiny loc}}^{1,p}(\overline{\Omega})$ of a sequence
$\{\bu^k\}_{k=1}^\infty$, where $\bu^k$ is a solution of problem \eqref{truncated} with $T=k$. Now, for $\tau\ge\max\{t,2\}$ we define the
function
$$
z_\tau(\eta)=\int_{\eta-1}^\eta y_\tau(t)dt, \ \ \eta\ge1,
$$
where 
$$
y_\tau(t):=\frac{1}{k}\left|\bu^k\right|_{1,2,\Omega_{i,\tau-t,\tau+t}}^2+
\left|\bu^k\right|_{1,p,\Omega_{i,\tau-t,\tau+t}}^p
$$
(see Section \ref{section 2} for the definition of $\Omega_{i,\tau-t,\tau+t}$). Similarly to the proof of \eqref{estimate for y}, it is possible to show that
$$
z_\tau(\eta)\leq\varphi(\eta)\,,\;\;\forall\,\eta\in[1,\tau]\,,$$
where $\varphi(\eta)=
c_2\eta+c_3$, for some constants $c_2,c_3$. Since
$$y_\tau(1/2)=\int_{1/2}^{3/2}y_\tau(1/2)\,dt\leq
\int_{1/2}^{3/2}y_\tau(t)\,dt=z_\tau(3/2)\leq\varphi(3/2)\equiv c\,,
$$
we have
$$\dsp\int_{\tau-1/2}^{\tau+1/2}\int_{\Sigma_i}\left|\nabla\bu^k\right|^p\leq
y_\tau(1/2)\leq c
$$
and, consequently, by the weak convergence of $\bu^k$ to $\bu$, we also
have
$$\int_{\tau-1/2}^{\tau+1/2}\int_{\Sigma_i}\left|\nabla\bu\right|^p\leq
c\,,
$$
which is \eqref{eq.82} with $\bu$ in place of $\bv$. Since, by Lemma \ref{a}, the vector field $\ba$ also satisfies this property, this end
the proof of Proposition~\ref{prop.2}.
\end{proof}


\smallskip

In \cite[p.1437]{marusic}, Maru\v{s}i\'c-Paloka observes the difficult of
obtaining uniqueness results for Navier-Stokes system with a power law.
In particular, this is an open question even in bounded domains.
We can prove an uniqueness result for problem \eqref{lady-solo pb}
under some conditions, which we specify precisely in 
Theorem \ref{teo.2} below. One of these conditons is motivated by Proposition \ref{prop.2} and another, by
the following propostion, which was inspired by the solution of Leray
problem given by Maru\v{s}i\'c-Paloka; cf. \cite[Lemma 4.2/(4.24)]{marusic}. 

\begin{prop}
\label{lema.10} For $i$ either
equal to $1$ or $2$, let $\bv=(v_1,\cdots,v_n)$ be a divergence free vector
field in $W_{\mbox{\tiny loc}}^{1,p}(\overline{\Omega_i})$ vanishing
on $\partial\Omega_i$ and having property \eqref{eq.82}. 
If for some $j\in\{1,\ldots,n\}$ and some positive number~$c$,
\begin{equation}
\label{eq.84}
\left|\frac{\partial v_j}{\partial x_j}(x)\right|\geq
c|x^\prime|^{1/(p-1)}
\end{equation}
for all $x=(x_1,x')\in\Omega_i$, then there is a constant $C$ such that
\begin{equation}
\label{eq.85}
(\bw\cdot\nabla \bv,\bw)_{\Omega_{i,t}}
\leq C{\kappa} \,\|\, |D({\bv})|^{(p-2)/2}D(\bw)\|_{2,\Omega_{i,t}}^2,
\end{equation}
for all $\bw\in {\mathcal D}_{\mbox{\tiny loc}}^{1,p}(\Omega_i)$
and $t>0$.
\end{prop}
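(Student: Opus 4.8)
The plan is to reduce the convective form to the symmetric gradient $D(\bv)$, then localise $\Omega_{i,t}$ to unit strips and, on each strip, combine H\"older's inequality (which brings in the bound \eqref{eq.82}) with a weighted Sobolev--Korn inequality whose weight is exactly the one produced by the lower bound \eqref{eq.84}. The first step is purely algebraic: since $w_kw_l$ is symmetric in $(k,l)$, only the symmetric part of $\nabla\bv$ survives in the quadratic form, so
$$
(\bw\cdot\nabla\bv)\cdot\bw=\sum_{k,l=1}^n w_kw_l\,\frac{\partial v_l}{\partial x_k}
=\sum_{k,l=1}^n D_{kl}(\bv)\,w_kw_l=D(\bv):(\bw\otimes\bw),
$$
whence pointwise $|(\bw\cdot\nabla\bv)\cdot\bw|\le|D(\bv)|\,|\bw|^2$. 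This already displays $D(\bv)$ rather than the full gradient, which is what lets the viscous weight $|D(\bv)|^{p-2}$ enter on the right-hand side of \eqref{eq.85}.

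Next I would write $\Omega_{i,t}$ as a disjoint union of the unit strips $S_m:=\Omega_{i,m-1,m}$ and estimate on each one. Using $|D(\bv)|\le|\nabla\bv|$, H\"older with exponents $p,\;p'=p/(p-1)$, and \eqref{eq.82},
$$
\int_{S_m}|D(\bv)|\,|\bw|^2\le\Big(\int_{S_m}|\nabla\bv|^p\Big)^{1/p}\Big(\int_{S_m}|\bw|^{2p'}\Big)^{1/p'}\le\kappa^{1/p}\,\|\bw\|_{2p',S_m}^2 ,
$$
where $2p'=2p/(p-1)$ stays below the Sobolev exponent for $n=2,3$ when $p\ge2$. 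The hypothesis \eqref{eq.84} enters through the weight: it gives $|D(\bv)|\ge|D_{jj}(\bv)|\ge c|x'|^{1/(p-1)}$, hence
$$
|D(\bv)|^{p-2}\ge c^{\,p-2}|x'|^{\sigma},\qquad \sigma=\frac{p-2}{p-1}\in[0,1),
$$
so that $\int_{S_m}|x'|^{\sigma}|D(\bw)|^2\le c^{-(p-2)}\int_{S_m}|D(\bv)|^{p-2}|D(\bw)|^2$, and it is enough to control $\|\bw\|_{2p',S_m}^2$ by the \emph{weighted} Dirichlet energy $\int_{S_m}|x'|^{\sigma}|D(\bw)|^2$.

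That control is the analytic heart. Because $\bw$ vanishes on the lateral boundary $\partial\Omega_i$, Poincar\'e applies on each strip, and since $0\le\sigma<1$ the power $|x'|^{\sigma}$ is a Muckenhoupt $A_2$-weight on the bounded cross-sections; a weighted Sobolev--Poincar\'e inequality then yields $\|\bw\|_{2p',S_m}^2\le C\int_{S_m}|x'|^{\sigma}|\nabla\bw|^2$, and a weighted counterpart of Korn's inequality (the $A_2$ version of Lemma~\ref{inequalities}\,ii)) replaces $|\nabla\bw|$ by $|D(\bw)|$. Combining with the previous display gives $\|\bw\|_{2p',S_m}^2\le C\int_{S_m}|D(\bv)|^{p-2}|D(\bw)|^2$, and feeding this back and summing over $m$ (the strips tile $\Omega_{i,t}$, so the weighted energies add up to the integral over $\Omega_{i,t}$ with a constant independent of $t$) produces
$$
(\bw\cdot\nabla\bv,\bw)_{\Omega_{i,t}}\le C\kappa^{1/p}\int_{\Omega_{i,t}}|D(\bv)|^{p-2}|D(\bw)|^2 ,
$$
which is \eqref{eq.85}; the $\kappa$-dependence is through the factor $\kappa^{1/p}$, which is all that matters for the uniqueness application, where $\kappa\to0$ as the flux tends to $0$.

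The main obstacle is the weighted step: establishing the weighted Sobolev--Poincar\'e and weighted Korn inequalities with the degenerate weight $|x'|^{(p-2)/(p-1)}$, the delicate region being the axis $x'=0$ where the weight vanishes. A scaling test with a bump concentrating on the axis shows that the exponent $1/(p-1)$ in \eqref{eq.84} is precisely tuned so that the Sobolev exponent $2p'$ and the weight power $\sigma$ are compatible (equivalently, so that $|x'|^{\sigma}\in A_2$ and no near-axis concentration can defeat the inequality); this is exactly why that exponent is imposed in the statement. Verifying the weighted Korn inequality in this $A_2$ setting, and confirming the constants are uniform in $t$, are the points that will require the most care.
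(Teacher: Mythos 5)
Your opening reduction and your closing summation over unit strips match the paper's proof: the paper likewise localises to $\Omega_{i,t-1,t}$, applies H\"older with exponents $p,p'$ together with \eqref{eq.82} to get $|(\bw\cdot\nabla\bv,\bw)_{\Omega_{i,t-1,t}}|\le C\kappa\,\|\bw\|_{2p',\Omega_{i,t-1,t}}^2$, and then adds the strip estimates. The divergence is in the central step, and there your argument is not actually carried out: you reduce everything to a weighted Sobolev--Poincar\'e inequality and a weighted Korn inequality for the degenerate $A_2$ weight $|x'|^{(p-2)/(p-1)}$, and you yourself flag these as the unproven ``analytic heart.'' As written, that is a genuine gap --- the whole content of the proposition is precisely the passage from $\|\bw\|_{2p'}$ to $\| |D(\bv)|^{(p-2)/2}D(\bw)\|_2$, and appealing to unstated $A_2$ theory (whose applicability, with constants uniform over the strips, would itself need verification) leaves it open. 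The paper shows this machinery is unnecessary. It picks an auxiliary exponent $r\in(1,2)$ with $2p'\le rn/(n-r)$, uses the ordinary Sobolev and Poincar\'e inequalities to get $\|\bw\|_{2p'}\le C|\bw|_{1,r}$, then the ordinary Korn inequality of Lemma \ref{inequalities} at exponent $r$, inserts $1=|D(\bv)|^{(p-2)r/2}\,|D(\bv)|^{-(p-2)r/2}$, and applies H\"older with exponents $2/r$ and $2/(2-r)$; the second factor is $\bigl(\int |D(\bv)|^{-(p-2)r/(2-r)}\bigr)^{(2-r)/2}$, which by \eqref{eq.84} is bounded by the explicit radial integral $\bigl(\int_0^{l_2}s^{\,n-2-\frac{(p-2)r}{(p-1)(2-r)}}\,ds\bigr)^{(2-r)/2}$, finite once $r\le \frac{2(n-1)(p-1)}{np-(n+1)}$ (and one checks such an $r$ is compatible with the Sobolev constraint for $n=2,3$, $p\ge2$). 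This elementary splitting is exactly the proof of the weighted inequality you postulate, so your outline becomes a complete proof once you replace the $A_2$ appeal by this H\"older device; it also explains, more concretely than a scaling heuristic, why the exponent $1/(p-1)$ in \eqref{eq.84} is the right one. Two minor points: your identity $(\bw\cdot\nabla\bv)\cdot\bw=D(\bv):(\bw\otimes\bw)$ is correct but unnecessary, since \eqref{eq.82} controls the full gradient; and the factor you obtain is $\kappa^{1/p}$ where the statement has $\kappa$ --- harmless for the uniqueness application, and the same discrepancy is already present in the paper's own first display.
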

\begin{proof} Denote $\Omega=\Omega_i$. By H\"older inequality 
and \eqref{eq.82}, we obtain
\begin{equation}
|(\bw\cdot\nabla\bv,\bw)_{\Omega_{t-1,t}}|
\leq C{\kappa} \|\bw \|_{2p^\prime,\Omega_{t-1,t}}^2.
\label{eq.86}
\end{equation}
By Sobolev embedding and Poincar\'e inequality (Lemma \ref{inequalities}),
we have
\begin{equation}
\| \bw \|_{2p^\prime,\Omega_{t-1,t}}
\leq C |\bw|_{1,r,\Omega_{t-1,t}}\,,\label{eq.87}
\end{equation}
for any $r\in (1,2)$ such that $2p^\prime\leq \frac{rn}{n-r}$. Now, by
Korn inequality (Lemma \ref{inequalities}), H\"older inequality and \eqref{eq.84}, we obtain
$$
\begin{array}{rl}
&|\bw|_{1,r,\Omega_{t-1,t}}^r
=\int_{\Omega_{t-1,t}}|\nabla\bw|^r\\
\leq& C\int_{\Omega_{t-1,t}}|D(\bv)|^{(p-2)r/2}|D(\bw)|^r\frac{1}
{|D(\bv)|^{(p-2)r/2}}\\
\leq& C(\int_{\Omega_{t-1,t}}|D(\bv)|^{p-2}|D(\bw)|^2)^{r/2}(
\int_{\Omega_{t-1,t}}\frac{1}
{|D(\bv)|^{(p-2)r/(2-r)}})^{(2-r)/2}\\
\leq&C\||D(\bv)|^{(p-2)/2}D(\bw)\|_{2,\Omega_{t-1,t}}
^r(\int_0^{l_2}s^{n-2-\frac{(p-2)r}{(p-1)(2-r)}}ds)^{(2-r)/2}.
\end{array}
$$
Then, chosen $r\leq\frac{2(n-1)(p-1)}{np-(n+1)}$, it follows that
\begin{equation}
|\bw|_{1,r,\Omega_{t-1,t}}
\leq C\| |D(\bv)|^{(p-2)/2}D(\bw)\|_{2,\Omega_{t-1,t}}.
\label{eq.88}
\end{equation}
Thus, from $(\ref{eq.87})$ and $(\ref{eq.88})$, we have
$$
\|\bw\|_{2p^\prime,\Omega_{t-1,t}}
\leq C\| |D(\bv)|^{(p-2)/2}D(\bw)\|_{2,\Omega_{t-1,t}}
$$
and from $(\ref{eq.86})$, we get
\begin{equation}
|(\bw\cdot\nabla\bv,\bw)_{\Omega_{t-1,t}}|
\leq C{\kappa}\, \|\, |D(\bv)|^{(p-2)/2}D(\bw)\|_{2,\Omega_{t-1,t}}^2.
\label{eq.89}
\end{equation}
Finally, writing $\Omega_t$ as a finite union of domains
$\Omega_{t-j-1,t-j}$,\break\hfill $j=0,\cdots,m<~\infty$, and adding inequality \eqref{eq.89} with $\Omega_{t-j-1,t-j}$ in place of 
$\Omega_{t-1,t}$ with respect to $j$, we obtain \eqref{eq.85}.
\end{proof}

\begin{remark}
\label{obs.7} An example of a solution satisfying property
\eqref{eq.84} when $\Omega_i$ is a straight outlet (i.e. the cross
sections $\Sigma(x_1)$ are constant, with respect to $x_1$) is the Poiseuille flow in $\Omega_i$. See \cite[\S 3]{marusic}.
\end{remark}

We now state and prove our uniqueness result.

\begin{teo}
\label{teo.2} Let $\kappa>0$ be sufficiently small and $l$ be some
positive number. Then there is no more than one weak solution of problem
\eqref{lady-solo pb} in $W^{2,l}_{\mbox{\tiny loc}}(\overline{\Omega})$
and satisfying \eqref{eq.82} and property \eqref{eq.85} in $\Omega$,
i.e. for some constant~$C$, 
\begin{equation}
\label{eq.85 in Omega}
(\bw\cdot\nabla \bv,\bw)_{\Omega,t}
\leq C{\kappa} \,\|\, |D({\bv})|^{(p-2)/2}D(\bw)\|_{2,\Omega_{t}}^2,
\end{equation}
for all $\bw\in {\mathcal D}_{\mbox{\tiny loc}}^{1,p}(\Omega)$ and $t>0$.
\end{teo}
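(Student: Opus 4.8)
The plan is to argue by contradiction through a weighted energy estimate for the difference of two solutions, driven this time by the smallness of $\kappa$ rather than by Young's inequality. Suppose $\bv_1$ and $\bv_2$ are two weak solutions in $W^{2,l}_{\mbox{\tiny loc}}(\overline{\Omega})$ satisfying \eqref{eq.82} and \eqref{eq.85 in Omega}, write $\mathbb{S}(\bv)=|D(\bv)|^{p-2}D(\bv)$, and set $\bw=\bv_1-\bv_2$. Since $\bv_1$ and $\bv_2$ carry the same flux $\alpha$, the field $\bw$ has zero flux, lies in $W^{1,p}_{\mbox{\tiny loc}}(\overline{\Omega})$ and vanishes on $\partial\Omega$. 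Subtracting equation \eqref{lady-solo pb}$_1$ for $\bv_2$ from that for $\bv_1$, using the decomposition $\bv_1\cdot\nabla\bv_1-\bv_2\cdot\nabla\bv_2=\bw\cdot\nabla\bv_1+\bv_2\cdot\nabla\bw$, testing against $\bw$ and integrating by parts over the bounded cut $\Omega_t$ (legitimate thanks to the assumed $W^{2,l}_{\mbox{\tiny loc}}$ regularity), I would obtain
$$
\int_{\Omega_t}\!\big(\mathbb{S}(\bv_1)-\mathbb{S}(\bv_2)\big):D(\bw)
= -(\bw\cdot\nabla\bv_1,\bw)_{\Omega_t}+\mathcal{B}(t),
$$
where, since $\bv_2$ is divergence free and $\bw|\partial\Omega=0$, the term $(\bv_2\cdot\nabla\bw,\bw)_{\Omega_t}$ reduces to a boundary contribution, and $\mathcal{B}(t)$ collects all the surface integrals over $\Gamma_t=\Sigma(t)\cup\Sigma(-t)$: the convective term $\tfrac12\int_{\Gamma_t}|\bw|^2(\bv_2\cdot\bn)$, the viscous term $\int_{\Gamma_t}\bw\cdot(\mathbb{S}(\bv_1)-\mathbb{S}(\bv_2))\bn$, and the pressure term $\int_{\Gamma_t}(\mathcal{P}_1-\mathcal{P}_2)(\bw\cdot\bn)$.

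Next I would bound the two sides. By the monotonicity inequality (Lemma \ref{inequalities}\,i)), the left-hand side is bounded below by $c_1 Y(t)$, where
$$
Y(t):=\int_{\Omega_t}\big(|D(\bv_1)|^{p-2}+|D(\bv_2)|^{p-2}\big)|D(\bw)|^2;
$$
this is precisely the quantity matching hypothesis \eqref{eq.85 in Omega}, since $\||D(\bv_1)|^{(p-2)/2}D(\bw)\|_{2,\Omega_t}^2\le Y(t)$ and the proof of Proposition \ref{lema.10} in fact bounds the absolute value, so $|(\bw\cdot\nabla\bv_1,\bw)_{\Omega_t}|\le C\kappa\,Y(t)$. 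Choosing $\kappa$ so small that $C\kappa<c_1$, I can absorb the interior convective term and conclude $(c_1-C\kappa)Y(t)\le|\mathcal{B}(t)|$. The same inequality also yields $\int_{\Omega_t}|D(\bw)|^p\le c\,Y(t)$ through the second bound $c_2|x-y|^p$ in Lemma \ref{inequalities}\,i). Finally $Y$ is nondecreasing and, by Hölder together with the linear growth \eqref{lady-solo pb}$_5$ of $\int_{\Omega_t}|\nabla\bv_i|^p$ (inherited by $\bw$), grows at most linearly: $Y(t)\le Ct$.

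The crux is the boundary term $\mathcal{B}(t)$, and here the hypothesis \eqref{eq.82} becomes essential. Rather than estimating $\mathcal{B}(t)$ pointwise, I would integrate the inequality over $[\eta-1,\eta]$, turning each $\int_{\eta-1}^\eta\int_{\Gamma_t}$ into a slice integral over $\Omega_{\eta-1,\eta}$ and introducing $Z(\eta)=\int_{\eta-1}^\eta Y(t)\,dt$, so that $Z'(\eta)=Y(\eta)-Y(\eta-1)=\int_{\Omega_{\eta-1,\eta}}(|D(\bv_1)|^{p-2}+|D(\bv_2)|^{p-2})|D(\bw)|^2$. On each slice, \eqref{eq.82} makes the weights small, $\int_{\Omega_{\eta-1,\eta}}|D(\bv_i)|^p\le\kappa$, while Poincaré (Lemma \ref{inequalities}\,iii)) gives $\|\bv_i\|_{p,\Omega_{\eta-1,\eta}}\le c\kappa^{1/p}$ and, using also Korn (Lemma \ref{inequalities}\,ii)) and $\int|D(\bw)|^p\le cZ'(\eta)$, the bounds $\|\bw\|_{p,\Omega_{\eta-1,\eta}},\|\bw\|_{2p',\Omega_{\eta-1,\eta}}\le cZ'(\eta)^{1/p}$. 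Hölder and Cauchy–Schwarz then dominate the convective and viscous surface integrals by powers of $Z'(\eta)$ with exponents $\le 1$ (typically $Z'(\eta)^{2/p}$ and $Z'(\eta)^{1/2+1/p}$), each carrying a positive power of $\kappa$. The pressure term I would treat exactly as $I_5$ in the proof of \eqref{estimate for y}: the $W^{2,l}_{\mbox{\tiny loc}}$ regularity gives $\mathcal{P}_1-\mathcal{P}_2\in L^{p'}_{\mbox{\tiny loc}}$, and solving $\nabla\cdot\bpsi=\bw\cdot\bn$ on the slice via Lemma \ref{div eq} and substituting the difference equation bounds it again by powers of $Z'(\eta)$. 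Crucially, because the flux-carrying field $\ba$ cancels in the difference, no additive term $c_1\eta$ survives, so the outcome is a homogeneous differential inequality $Z(\eta)\le\Psi(Z'(\eta))$ for all $\eta\ge1$, with $\Psi(0)=0$ and $\Psi(\tau)\le c\tau$ for large $\tau$.

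With this in hand I would invoke Lemma \ref{ls lemma}\,ii): if $Z$ were not identically zero it would be a nondecreasing, nonzero function satisfying $Z(\eta)\le\Psi(Z'(\eta))$ with $\Psi(0)=0$ and $\Psi(\tau)\le c\tau$, whence $\liminf_{\eta\to\infty}\mbox{e}^{-\eta/c}Z(\eta)>0$; that is, $Z$ would grow at least exponentially, contradicting the linear bound $Z(\eta)\le Y(\eta)\le C\eta$ established above. Therefore $Z\equiv0$, so $Y\equiv0$, so $D(\bw)=0$ throughout $\Omega$, and Korn's inequality (Lemma \ref{inequalities}\,ii)) together with $\bw|\partial\Omega=0$ forces $\bw\equiv0$, i.e. $\bv_1=\bv_2$. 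I expect the main obstacle to be the third step, namely closing the boundary estimates so that the resulting inequality is genuinely homogeneous: in particular, controlling the pressure difference purely in terms of $Z'(\eta)$ and exploiting the slice-smallness from \eqref{eq.82} so that every surface term reduces to a power of $Z'(\eta)$ of exponent at most one, with no leftover additive growth.
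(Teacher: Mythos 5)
Your proposal is correct and follows essentially the same route as the paper: form the difference equation for $\bw=\bv_1-\bv_2$, test with $\bw$ over $\Omega_t$, use the monotonicity inequality of Lemma \ref{inequalities}\,i) together with \eqref{eq.85 in Omega} and the smallness of $\kappa$ to absorb the convective term, integrate over unit slices (using \eqref{eq.82} in place of Lemma \ref{a}\,ii)) to get a homogeneous inequality $z(\eta)\le\Psi(z'(\eta))$, and conclude via Lemma \ref{ls lemma}\,ii) by contradiction with the linear growth \eqref{lady-solo pb}$_5$. The only cosmetic deviation is your decomposition $\bw\cdot\nabla\bv_1+\bv_2\cdot\nabla\bw$ of the convective difference, which trades the paper's cubic boundary term (and its resulting $\tau^{3/p}$ contribution to $\Psi$, forcing the case split $p\ge3$ versus $p<3$) for a term of exponent $2/p<1$; either way the growth forced by Lemma \ref{ls lemma}\,ii) is superlinear and the contradiction goes through.
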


\begin{proof} Let ${\bv}_1$ and ${\bv}_2$ be solutions of 
\eqref{lady-solo pb} satisfying the assumptions in Theorem \ref{teo.2}.
Denote ${\bw}={\bv}_1-{\bv}$. Then
$$\begin{array}{l}
-\mbox{div}\{D({\bw})+|D({\bw})+D({\bv}_2)|^{p-2}[D({\bw})+
D({\bv}_2)]-|D({\bv}_2^k)|^{p-2}D(\bv_2)\}\\+
{\bw}\cdot\nabla{\bw}+{\bw}\cdot\nabla\bv_2+{\bv}_2\cdot\nabla{\bw}
+\nabla\left(\mathcal{P}_1-\mathcal{P}_2\right)=0,
\end{array}
$$
where $\mathcal{P}_1,\mathcal{P}_2\in W^{1,l}_{\mbox{\tiny loc}}(\overline{\Omega})$. Multiplying this equation by $\bw$ and integrating by parts
over $\Omega_t$, similarly to derivation of \eqref{integrating by parts}, we obtain   
$$
\begin{array}{c}
\int_{\Omega_t}\left\{|D({\bw})+D(\bv_2)|^{p-2}[D({\bw})+D(\bv_2)]-|D({\bv}_2)|^{p-2} D(\bv_2)\right\}:D({\bw})\\
=-({\bw}\cdot\nabla {\bv}_2,{\bw})_{\Omega_t}-I\,,
\end{array}
$$
where
$$\begin{array}{c}
I=-\int_{\partial\Omega_t}\frac{\left|{\bw}\right|^2}{2}({\bw}\cdot{\bn} +\bv_2\cdot{\bn} -\left(\mathcal{P}_1-\mathcal{P}_2\right)
\left({\bw}\cdot{\bn}\right)\\
+\int_{\partial\Omega_t}{\bw}\cdot\{|D({\bw})+D(\bv_2)|^{p-2} [D({\bw})+D(\bv_2)]-|D({\bv}_2)|^{p-2}D({\bv}_2)\}\bn\,. 
\end{array}
$$
Estimating some terms in the above equation by using Lema \ref{inequalities} and assumption \eqref{eq.85 in Omega}, it follows that 
$$
c_1\int_{\Omega_t}|\nabla{\bw}|^p+c_2\int_{\Omega_t}|D(\bv_2)|^{p-2}|D({\bw})^2
\leq c_3|\kappa|\int_{\Omega_t}|D(\bv_2)|^{p-2} \left|D({\bw})\right|^2+I\,,
$$
for some positive constants $c_1,c_2,c_3$. 
Thus, if $|\kappa|<c_1/c_2$, we have
$$
y(t):=|{\bw}|_{1,p,\Omega_t}^p y(t)\leq cI\,,
$$
(for some constant $c$). Next, integrating $y(t)$ from $\eta-1$ to $\eta$,
$\eta\ge1$, and proceeding similarly to the proof of \eqref{second estimate for z}, but using \eqref{eq.82} instead of Lemma \ref{a}\,ii), 
we obtain
$$
z(\eta)\leq c\Psi\left(z^\prime(\eta)\right),$$
with $\Psi(\tau)=\tau+\tau^{1/p}+\tau^{2/p}+\tau^{3/p}$. 
Now suppose $z$ is not identically zero. Then, by Lema \ref{ls lemma},
we have
$$\lim_{t\rightarrow\infty}z(t)=\infty.$$
Besides, since for $\tau\geq\tau_1$ (for some $\tau_1>0$)
$$\Psi(\tau)\leq
\left\{ \begin{array}{rll} \tau,&\mbox{if}&p\geq3\\
\frac{3}{p},&\mbox{if}&p<3\end{array}\right. ,$$  
by Lemma \ref{ls lemma} again, we also have
$$\begin{array}{lll}
\liminf_{t\rightarrow\infty}\mbox{e}^{-t}z(t)>0,& \mbox{if} &p\geq3\\
\liminf_{t\rightarrow\infty}t^{-3/(3-p)}z(t)>0,& \mbox{if} &p<3\,.
\end{array}$$
This contradicts \eqref{lady-solo pb}$_5$.
Therefore, $z\equiv0$ and so, $\bv_1=\bv_2$.
\end{proof}

\begin{remark}
\label{kappa alpha} By tracking all the estimates we did to obtain \eqref{estimate for y}, similarly, to obtain \eqref{eq.82}, we can
see that the constant $\kappa$ in \eqref{eq.82} depends on the
flux $\alpha$ so that $\kappa={\mathcal O}(|\alpha|^\gamma)$, for
some positive number $\gamma$. In particular, $\kappa$ tends to zero
when $\alpha$ tends to zero. 
\end{remark}

\begin{remark}
For an example where condition \eqref{eq.85 in Omega} is accomplished, see
\cite[p.1437/\S 4.2]{marusic}.
\end{remark}

\begin{remark} Regarding the Stokes system with a power law, i.e.
system \eqref{nspl} discarding $(\bv\cdot\nabla)\bv$, we have uniqueness
of solution for the corresponding {Ladyzhenskaya-Solonnikov problem}
for any flux $\alpha$, as occurs in the case $p=2$ \cite[Corollary 2.1, p. 739]{ls}. 
\end{remark}

\end{document}